\documentclass[a4paper,12pt]{amsart}
\usepackage[utf8]{inputenc}
\usepackage[british]{babel}
\usepackage{amsmath}
\usepackage{amssymb}
\usepackage{amsthm}
\usepackage[comma,square,numbers]{natbib}
\usepackage{geometry}
\usepackage{hyperref}

\newcommand{\mc}{\mathcal}
\newcommand{\eps}{\epsilon}

\newtheorem{theorem}{Theorem}[section]
\newtheorem{corollary}[theorem]{Corollary}
\newtheorem{lemma}[theorem]{Lemma}
\newtheorem{proposition}[theorem]{Proposition}

\theoremstyle{definition}
\newtheorem{definition}[theorem]{Definition}

\theoremstyle{remark}
\newtheorem{remark}[theorem]{Remark}

\begin{document}

\title[Repulsive Mean-Field]{Repulsive Mean-Field Couplings and Self-Consistent Transfer Operators}

\author[R. Castorrini]{Roberto Castorrini}
\address{(Roberto Castorrini) DEIM, University of Tuscia (Viterbo) \& Dipartimento di Matematica, Universit\`a di Pisa}
\email{roberto.castorrini@unitus.it}

\author[S. Galatolo]{Stefano Galatolo}
\address{(Stefano Galatolo) Dipartimento di Matematica, Universit\`a di Pisa -- Largo B.~Pontecorvo 5 -- 56127 Pisa --  Italy}
\email{stefano.galatolo@unipi.it}

\author[M. Tanzi]{Matteo Tanzi}
\address{(Matteo Tanzi) Department of Mathematics, King's College London}
\email{matteo.tanzi@kcl.ac.uk}

\date{}

\begin{abstract}
    We study a one-dimensional self-consistent transfer operator generated by a repulsive mean-field coupling. The interaction admits a natural interpretation as a myopic relocation dynamics balancing congestion and uniformly distributed resources. Using the quantile representation of probability measures, combined with self-consistent transfer operators methods, we prove the existence and uniqueness of a physically relevant invariant measure for the system and characterize it explicitly through a nonlinear fixed-point equation. We then show that the corresponding finite-particle equilibrium converges to the continuum equilibrium with optimal order in Wasserstein distance and establish a quantitative thermodynamic limit.

\end{abstract}

\maketitle

\section{Introduction}

Mean-field coupled maps combine simple microscopic dynamics with an interaction through a collective observable. Despite this elementary description, they can display synchronization, clustering, collective chaos, phase transitions, and ergodicity breaking; see, among others, \cite{Kan,NK,PK,Just,Just2} and the review \cite{BUMI}. When the number of components tends to infinity, the state of the population is described by a probability measure and its evolution is governed by a nonlinear operator acting on measures, usually called a \emph{self-consistent transfer operator} (STO) \cite{K,Bl11,BUMI}. The invariant measures of the STO describe macroscopic equilibria of the thermodynamic-limit dynamics. Their existence, uniqueness, stability, and relation with the underlying finite-particle system are therefore central questions.

Much of the rigorous theory developed so far concerns expanding or hyperbolic uncoupled maps and couplings weak enough that the STO remains close to a linear transfer operator. This includes results for tent maps, expanding circle maps, smooth expanding maps, Anosov diffeomorphisms, systems with noise, and intermittent maps \cite{K,SB,bal,ST,BLS,Gal,BK}. More recently, the strong-coupling regime has been approached through synchronization arguments \cite{ST2}, bifurcation and phase-transition techniques \cite{BL}, the spectral analysis of the differential of an STO \cite{CGT}, and nonlinear cone contractions \cite{CGTcones}. These developments make it possible to study genuinely nonlinear regimes, but the existing methods and examples are still largely motivated by uncoupled dynamics with expansion or mixing.

In this paper we investigate a complementary setting in which the microscopic map is \emph{contractive}, while the mean-field interaction is \emph{repulsive}. More precisely, the uncoupled dynamics is generated by a strictly increasing contraction $T:I\to I$, whereas the interaction is the rank-based map
\[
g_{\varepsilon,\mu}(x)=(1-\varepsilon)x+\varepsilon F^-_\mu(x),
\qquad \varepsilon\in(0,1),
\]
where $F^-_\mu(x)=\mu([0,x))$. The corresponding STO is
\[
\mathcal L_\varepsilon\mu=(T\circ g_{\varepsilon,\mu})_*\mu.
\]
The two parts of the dynamics act in opposite directions: the contraction $T$ tends to concentrate the population, whereas the coupling separates agents in crowded regions and pushes the population toward a rank--position balance. This competition produces a nontrivial equilibrium even though the uncoupled map alone is dissipative. Our results hold for every $\varepsilon\in(0,1)$, with no weak-coupling assumption; indeed, increasing the strength of the repulsive interaction improves the macroscopic contraction estimate obtained below.

The model has several natural interpretations. If resources are uniformly distributed on $I$, then $F^-_\mu(x)-x$ measures the cumulative excess of agents relative to resources to the left of $x$. The update $g_{\varepsilon,\mu}$ is the unique myopic best response balancing a quadratic relocation cost with a quadratic rank--position mismatch. It can thus serve as a stylized model for congestion avoidance, spatial competition, or the redistribution of agents competing for a homogeneous resource, in the spirit of large-population and mean-field-game models \cite{LachapelleWolfram2011,LasryLions2007,HuangMalhameCaines2006}. For an atomless state, $F_\mu$ is also the monotone optimal transport from $\mu$ to the uniform measure, and $g_{\varepsilon,\mu}$ is the associated displacement-interpolation step \cite{McCann1997,Villani2009,Santambrogio2015}. The subsequent action of $T$ may be interpreted as a dissipative adjustment or an exogenous pull toward a stable microscopic state.

\medskip
\noindent\textbf{Main results.}
The main observation is that the nonlinearity of the coupling becomes particularly simple in quantile coordinates. If $\mu$ is atomless and $Q_\mu$ denotes its quantile function, then
\[
Q_{(g_{\varepsilon,\mu})_*\mu}(u)
   =(1-\varepsilon)Q_\mu(u)+\varepsilon u.
\]
Consequently, for atomless probability measures $\mu$ and $\nu$,
\[
d_W\bigl(\mathcal L_\varepsilon\mu,
          \mathcal L_\varepsilon\nu\bigr)
\leq \lambda^{-1}(1-\varepsilon)d_W(\mu,\nu),
\]
where $\lambda^{-1}<1$ is a Lipschitz bound for $T$ and $d_W$ is the $1$-Wasserstein distance. Thus the STO is globally contracting on the physically relevant class, rather than merely locally contracting near an equilibrium. We deduce exponential convergence from every absolutely continuous initial measure to a unique absolutely continuous invariant measure $\mu^\varepsilon$. This equilibrium is characterized explicitly by the scalar fixed-point equation
\[
Q^\varepsilon(u)
=T\bigl((1-\varepsilon)Q^\varepsilon(u)+\varepsilon u\bigr).
\]

The restriction to atomless measures is essential rather than technical. Because the coupling uses the lower-rank convention $F^-_\mu$, Dirac masses are also invariant when supported at fixed points of $T$. The system therefore exhibits coexistence between an atomic equilibrium and the unique non-atomic equilibrium selected by absolutely continuous initial data. This distinction clarifies the precise sense in which the latter is the physical invariant measure.

We next connect the continuum description with the deterministic $N$-particle dynamics. The continuum quantile $Q^\varepsilon$ produces an exact, strictly ordered particle equilibrium by sampling it at the ranks $i/N$. This equilibrium is unique within the strictly ordered sector and attracts every strictly ordered configuration exponentially fast. Its empirical measure converges to $\mu^\varepsilon$ at the optimal order $N^{-1}$ in $d_W$, with explicit two-sided bounds. Finally, we prove a quantitative thermodynamic limit for arbitrary absolutely continuous initial data and compatible empirical approximations. The estimate is uniform over all discrete times, not only on a fixed finite time horizon: the one-step consistency error does not accumulate because it is damped by the same contraction that governs the continuum evolution.

\textbf{Organization of the paper.}
In Section~\ref{subsec:repulsive-motivation} we derive the repulsive interaction from a resource--congestion model and discuss its interpretations in terms of myopic best response and optimal transport. Section~\ref{sec:self-rep} introduces the STO generated by the contracting map, proves the Wasserstein contraction principle, and characterizes the physical invariant measure. Section~\ref{sec:finiteN} studies the finite-particle system, its ordered equilibrium and convergence to it, the optimal approximation of the continuum equilibrium, and the uniform-in-time thermodynamic limit.

%%%%%%%%%%%%%%%%%%Self-repelling

\section{A resource-congestion model and its repulsive coupling}
\label{subsec:repulsive-motivation}

Let us consider a continuum of (indistinguishable) agents moving on the one-dimensional
resource space $I=[0,1]$. The macroscopic configuration of the population is described by
a probability measure $\mu\in\mathcal P(I)$, where $\mu(A)$ represents the fraction of agents
whose position lies in a Borel set $A\subset I$.
 For $\varepsilon \in (0,1]$, and $h(x,z)=\mathbf{1}_{[0,x)}(z)$, we will consider the following mean field coupling for the dynamics of these agents \begin{equation*} g _{\varepsilon ,\mu }(x)=(1-\varepsilon )x+\varepsilon\int_{I} h(x,z) d\mu (z). \end{equation*}
In this section we  show that this interaction can be interpreted, in a suitable sense, as a repulsive coupling. Such a mechanism emerges when agents adjust their positions so as to maximize their share of a resource that is homogeneously distributed over space, while facing congestion effects due to competition with nearby agents. Examples include animals spreading out while grazing, beachgoers seeking personal space, or firms locating to avoid direct competition for the same customer base.

Throughout this section, we denote by
\[
F_\mu^-(x):=\mu([0,x)),
\qquad x\in I,
\]
the left-continuous cumulative distribution function of \(\mu\). Thus
\[
\int_I h(x,z)\,d\mu(z)=F_\mu^-(x).
\]

Assume that resources  are homogeneously distributed
over $I$, i.e.\ proportional to Lebesgue measure $m$ on $I$. Then the fraction of total resources
available in the prefix interval $[0,x)$ is exactly $m([0,x))=x$, whereas the fraction of agents
in the same prefix is $F_\mu^-(x)$. This suggests measuring a  {cumulative} congestion (misalignment)
at position $x$ by
\begin{equation}
\Delta_\mu(x):=F_\mu^-(x)-x
=\mu([0,x))-m([0,x)).
\label{eq:excess-mass}
\end{equation}
The sign of $\Delta_\mu(x)$ has the following interpretation:
\begin{itemize}
\item if $\Delta_\mu(x)>0$, then the left portion $[0,x)$ contains a {larger} fraction of agents
than its share of resources (overcrowding on the left);
\item if $\Delta_\mu(x)<0$, then $[0,x)$ is {under}-occupied relative to resources (scarcity of agents on the left).
\end{itemize}
In either case, an agent located near $x$ experiences an incentive to relocate so as to reduce
the imbalance between  {population mass} and  {resource mass}. This is consistent with a broad
class of ``aversion/congestion'' modelling principles in large-population games and crowd dynamics,
where agents incur costs increasing with crowding and/or seek personal space
(see, e.g., \cite{LachapelleWolfram2011} and the mean field game framework of \cite{LasryLions2007,HuangMalhameCaines2006}).

\noindent{\bf A myopic relocation game with adjustment costs.}
We now show how the specific coupling
\begin{equation}
g_{\varepsilon,\mu}(x)\;=\;(1-\varepsilon)x+\varepsilon F_\mu^-(x),
\qquad \varepsilon\in(0,1],
\label{eq:coupling}
\end{equation}
arises as a  {one-step best response} under a simple (but explicit) microscopic model.

Fix a macroscopic state $\mu$ and consider an infinitesimal agent currently at position $x\in I$.
In the mean-field regime each individual agent has negligible mass, neglects its own impact on $\mu$, and treats $F_\mu^-$
as exogenous data. We interpret $r:=F_\mu^-(x)\in[0,1]$ as the agent's  {rank} (its population percentile)
at the current state. If $\mu$ has atoms, agents at the same position share the same lower rank $F_\mu^-(x)$; negligible individual mass does not exclude such atoms. Under homogeneous resources, a natural ``balanced'' configuration is one in which
 {rank equals position}: an agent at percentile $r$ should occupy location $r$, so that the share of agents
to the left matches the share of resources to the left. Hence the rank-position discrepancy
$r-y$ at a candidate new location $y$ quantifies instantaneous misalignment with respect to the
resource distribution.

We therefore postulate the following myopic relocation cost:
\begin{equation}
J_{\varepsilon,\mu}(y\,;\,x)
\;:=\;\frac{1-\varepsilon}{2}\,|y-x|^2
\;+\;\frac{\varepsilon}{2}\,|y-F_\mu^-(x)|^2,
\qquad y\in I,
\label{eq:myopic-cost}
\end{equation}
which trades off a quadratic  {adjustment cost} (relocation friction) against a quadratic
 {congestion/misalignment cost} (deviation from the balanced location associated to the current rank).
The parameter $\varepsilon$ controls the relative weight of these effects: small $\varepsilon$ corresponds
to high inertia (or large moving costs), while $\varepsilon\approx 1$ corresponds to near-instantaneous
adjustment; $\varepsilon=1$ gives full adjustment.

\medskip
\noindent\textbf{Best response.}
Since \eqref{eq:myopic-cost} is strictly convex in $y$, it admits a unique minimizer on $I$. A direct
computation gives the unconstrained minimizer as the solution of
\[
(1-\varepsilon)(y-x)+\varepsilon(y-F_\mu^-(x))=0,
\]
hence
\begin{equation}
y^\star\;=\;(1-\varepsilon)x+\varepsilon F_\mu^-(x)\;=\;g_{\varepsilon,\mu}(x),
\label{eq:best-response}
\end{equation}
which is exactly \eqref{eq:coupling}. Since it is a convex combination of $x,F_\mu^-(x)\in I$, it belongs to $I$ and is also the constrained minimizer. In particular,
\begin{equation}
g_{\varepsilon,\mu}(x)-x=\varepsilon\big(F_\mu^-(x)-x\big)=\varepsilon\,\Delta_\mu(x),
\label{eq:drift}
\end{equation}
so the direction of motion is determined by the sign of the cumulative imbalance \eqref{eq:excess-mass}:
if the left side is overcrowded ($\Delta_\mu(x)>0$), the agent moves right, and if it is under-occupied
($\Delta_\mu(x)<0$), the agent moves left.

%\paragraph{Mean-field evolution and equilibrium.}
If all agents simultaneously apply the best response \eqref{eq:best-response}, then the macroscopic state updates by
push-forward:
\begin{equation}
\mu^+\;=\;(g_{\varepsilon,\mu})_*\mu,
\label{eq:pushforward}
\end{equation}
yielding a deterministic mean-field evolution.

\noindent{ \bf A repulsive coupling.}
To connect \eqref{eq:coupling} to a literal repulsion mechanism, suppose that $\mu$ is absolutely continuous,
$d\mu(x)=\rho(x)\,dx$ with density $\rho\ge 0$.
Then $(F_\mu^-)'(x)=\rho(x)$ for a.e.\ $x$, and differentiating \eqref{eq:coupling} yields
\begin{equation}
g_{\varepsilon,\mu}'(x)\;=\;(1-\varepsilon)+\varepsilon\,\rho(x)\qquad\text{for a.e.\ }x\in I.
\label{eq:derivative}
\end{equation}
Hence, in regions where $\rho(x)>1$ (density higher than the uniform resource density), one has
$g_{\varepsilon,\mu}'(x)>1$, meaning that small separations between nearby agents are  {expanded}
by the update: local configurations are pushed apart. Conversely, where $\rho(x)<1$, distances are locally
contracted, increasing the density along transported trajectories where $0<\rho(x)<1$. In this sense, the coupling generates a local repulsive effect
in crowded regions together with a compensating drift toward
under-populated ones, consistent with competition for local resources.

The map $F_\mu^-(x)$ is nondecreasing and, in one dimension, coincides with the monotone rearrangement
transporting $\mu$ to the uniform distribution; in fact it is the optimal transport map for quadratic cost under standard
regularity assumptions \cite{Villani2009,Santambrogio2015}. The convex combination
\[
g_{\varepsilon,\mu}=(1-\varepsilon)\mathrm{Id}+\varepsilon F_\mu^-
\]
therefore corresponds to a  {displacement interpolation} step along the Wasserstein geodesic connecting
$\mu$ to the uniform distribution \cite{McCann1997}. Since \(F_\mu^-\) transports \(\mu\) onto the uniform measure,
the map \(g_{\varepsilon,\mu}\) is precisely the interpolation between the identity transport and the optimal transport toward
the resource distribution. This provides an additional variational interpretation:
agents move in the direction of homogenizing their density with respect to the resources (matching resources) while incurring quadratic moving costs,
exactly as encoded in \eqref{eq:myopic-cost}.

The mechanism above is intentionally minimalist: rather than solving a full dynamic programming problem as in
standard mean field games \cite{LasryLions2007,HuangMalhameCaines2006}, we postulate a one-step (myopic) relocation
with an explicit congestion/aversion signal. Nevertheless, the economic content is analogous to the crowd aversion
and congestion effects emphasized in mean-field-game-based crowd models, notably in
\cite{LachapelleWolfram2011}, where rational agents anticipate and react to density-dependent costs.
Here the interaction is encoded via the CDF $F_\mu^-$, producing a tractable repulsive coupling that
drives an atomless population toward a balanced (uniform) spatial configuration under the coupling-only evolution \eqref{eq:pushforward}.

\section{Self-consistent dynamics generated by a contracting map}
\label{sec:self-rep}

In this section we study the statistical properties of the self-consistent transfer operator associated with the repulsive coupling introduced in Section~\ref{subsec:repulsive-motivation}. Our main goal is to establish convergence to a physically relevant invariant measure under suitable assumptions on the underlying dynamics. The proof combines contraction estimates in the Wasserstein metric with a one-dimensional quantile representation, which will also provide an explicit characterization of the physical equilibrium and its approximation by finite particle systems.

\subsection{Setting}

Let \(I=[0,1]\), and denote by \(\mathcal P(I)\) the space of Borel
probability measures on \(I\). We denote by \(m\) Lebesgue measure on \(I\).

Let \(T\in C^1(I,I)\) be a strictly increasing embedding such that, for
some \(\lambda>1\),
\[
0<T'(x)\le \lambda^{-1}<1,
\qquad x\in I.
\]
In particular, \(T\) is a \(C^1\) diffeomorphism from \(I\) onto its image
\(T(I)\subset I\). For $\epsilon \in (0,1)$ and for
\(\mu\in\mathcal P(I)\), define
\[
F_\mu^-(x):=\mu([0,x)).
\]
The repulsive coupling map is
\[
g_{\epsilon,\mu}(x)=(1-\epsilon)x+\epsilon F_\mu^-(x).
\]

The self-consistent transfer operator (STO) associated to $T$ and $g_{\epsilon,\mu}$ 
is the nonlinear operator
\[
\mc L : \mathcal P(I) \to \mathcal P(I),
\qquad
\mc L(\mu)
:=
L_T\big(L_{g_{\epsilon,\mu}}(\mu)\big)
=
(T \circ g_{\epsilon,\mu})_* \mu .
\]

For a measurable map \(f:I\to I\), we write
\[
L_f\mu:=f_*\mu.
\]
Equivalently, for every Borel set \(A\subset I\),
\[
(L_f\mu)(A)=\mu(f^{-1}(A)).
\]
If \(\mu=\psi\,dx\) with \(\psi\in C^0(I)\), then
\[
F_\mu^-(x)=\int_0^x\psi(z)\,dz,
\]
so \(g_{\epsilon,\mu}\in C^1(I,I)\) and
\[
g'_{\epsilon,\mu}(x)=(1-\epsilon)+\epsilon\psi(x)\ge 1-\epsilon>0.
\]
Moreover \(g_{\epsilon,\mu}(0)=0\) and \(g_{\epsilon,\mu}(1)=1\), hence
\(g_{\epsilon,\mu}\) is a \(C^1\) diffeomorphism of \(I\). For such continuous densities,
$\mc L$ acts in the following way.

Let $F:=T\circ g_{\epsilon,\mu}$. 
Since $F$ is a $C^1$ diffeomorphism from $I$ onto $T(I)$, the transfer operator 
$\mc L = L_F$ acts on densities as
\[
\mc L \psi (x)
=
\frac{\psi\!\left(F^{-1}(x)\right)}
{\left|F'\!\left(F^{-1}(x)\right)\right|}
\]
for $x\in T(I)$.
Using the chain rule,
\[
F'(y)
=
T'\!\left(g_{\epsilon,\mu}(y)\right)
\, g'_{\epsilon,\mu}(y),
\]
and observing that
\[
F^{-1}(x)
=
g_{\epsilon,\mu}^{-1}\!\left(T^{-1}(x)\right),
\]
we obtain
\[
\mc L \psi(x)
=
\frac{
\psi\!\left(g_{\epsilon,\mu}^{-1}(T^{-1}(x))\right)
}{
T'\!\left(T^{-1}(x)\right)
\, g'_{\epsilon,\mu}\!\left(g_{\epsilon,\mu}^{-1}(T^{-1}(x))\right)
}.
\]
Since 
\[
g'_{\epsilon,\mu}(y)
=
(1-\epsilon)+\epsilon \psi(y),
\]
this becomes
\[
\mc L \psi(x)
=
\frac{
\psi\!\left(g_{\epsilon,\mu}^{-1}(T^{-1}(x))\right)
}{
T'\!\left(T^{-1}(x)\right)
\left[
(1-\epsilon)
+
\epsilon \psi\!\left(g_{\epsilon,\mu}^{-1}(T^{-1}(x))\right)
\right]
}.
\]
On the other hand, \((\mathcal L\psi)(x)=0\) for \(x\notin T(I)\).

%%%%%%%%%%%%%%%%%%%%%%%%

%%%%%%%%%%Wasserstain dist%%%%%%%%%

\subsection{The Wasserstein distance}
\label{subsec:wasserstein}
In the following sections we prove convergence to equilibrium by studying the action of $\mc L$ on probability measures endowed with the Wasserstein distance. We recall some well-known facts about optimal transport that we will use below. For $\mu,\nu \in \mathcal P(I)$, we define $\Pi (\mu,\nu)$ as the set of the couplings of $\mu$ and $\nu$, that is, $\pi \in \mathcal P(I \times I)$ is such that
\[
\pi(\mathcal{B}\times I)=\mu(\mathcal{B}), \qquad  \pi(I \times \mathcal{B})=\nu(\mathcal{B})
\]
for each Borel set $\mathcal{B}$. Equivalently $\pi \in \Pi (\mu,\nu)$ if and only if 
 \begin{equation}\label{eq:special-property-W}
  \int _{I \times I}
( g (x) + \psi (y)) d\pi(x,y)=\int_{I}g (x)\mu(dx)+ \int_{I}\psi (y)\nu(dy), \quad \forall g,\psi \in C^0(I).
 \end{equation}

 The Wasserstein distance of order $1$ is defined by
\[
d_W(\mu,\nu)
:=
\inf_{\pi \in \Pi(\mu,\nu)}
\int_{I\times I} |x-y|\, d\pi(x,y).
\]
By the Kantorovich--Rubinstein duality theorem (see \cite[Theorem 5.10]{Villani2009}),
\begin{equation}\label{def:bL-dist}
d_W(\mu,\nu)
=
\sup_{\substack{\varphi:I\to\mathbb R\\ \operatorname{Lip}(\varphi)\le 1}}
\left|
\int_I \varphi\, d\mu
-
\int_I \varphi\, d\nu
\right|,
\end{equation}
 where \(
\operatorname{Lip}(\varphi)
:=
\sup_{\substack{x,y\in I\\ x\neq y}}
\frac{|\varphi(x)-\varphi(y)|}{|x-y|}.
\)

\begin{remark}\label{rmk:weak}
Since $I=[0,1]$ is compact, every probability measure on $I$ has finite first moment. 
In this setting, the Wasserstein distance $d_W$ induces the weak topology on $\mathcal P(I)$ 
(see \cite[Theorem 6.9]{Villani2009}). 
That is, for $\{\mu_n\}_{n\in\mathbb N}\subset \mathcal P(I)$,
\[
\mu_n \rightharpoonup \mu
\quad \Longleftrightarrow \quad
d_W(\mu_n,\mu)\to 0.
\]
\end{remark}

\subsection{Results}

We focus on invariant measures that are physically relevant, namely measures
which attract absolutely continuous initial distributions. We write
\(\mu\ll\nu\) to say that \(\mu\) is absolutely continuous with respect to \(\nu\),
and we denote by
\[
AC(I):=\{\mu\in \mathcal P(I):\mu\ll m\}
\]
the set of probability measures on \(I\) which are absolutely continuous with
respect to the Lebesgue measure \(m\).

\begin{definition}\label{def:physical}
An invariant measure \(\mu^\eps\) is called \emph{physical} if, for every
initial measure \(\mu_0\in AC(I)\), the sequence defined by
\[
\mu_{n+1}
=
(T\circ g_{\eps,\mu_n})_* \mu_n
\]
converges weakly to \(\mu^\eps\).
\end{definition}
The main result of this section is the following theorem, which establishes the existence and uniqueness of the physical invariant measure together with exponential convergence to equilibrium for the associated STO. 
\begin{theorem}\label{thm:main-physical}
For every \(\epsilon\in(0,1)\), the self-consistent transfer operator
\(\mathcal L\) admits a unique physical invariant measure
\(\mu^\epsilon\in AC(I)\). Moreover, for every \(\mu_0\in AC(I)\),
\[
d_W\bigl(\mathcal L^n\mu_0,\mu^\epsilon\bigr)
\le
\sigma_\epsilon^n d_W(\mu_0,\mu^\epsilon),
\qquad n\ge0,
\]
where
\[
\sigma_\epsilon:=\lambda^{-1}(1-\epsilon)\in(0,1).
\]
In particular,
\[
\mathcal L^n\mu_0\longrightarrow\mu^\epsilon
\]
exponentially fast in \(d_W\), and hence weakly, for every
\(\mu_0\in AC(I)\).
\end{theorem}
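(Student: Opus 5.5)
We work in quantile coordinates, where the one-dimensional Wasserstein distance becomes an $L^1$ distance between quantile functions. For $\mu\in\mathcal P(I)$ let $Q_\mu(u):=\inf\{x: F_\mu(x)\ge u\}$, $u\in(0,1)$, with $F_\mu(x)=\mu([0,x])$. We use the classical identity
\[
d_W(\mu,\nu)=\int_0^1|Q_\mu(u)-Q_\nu(u)|\,du,
\]
and the fact that for a nondecreasing map $f$ one has $Q_{f_*\mu}=f\circ Q_\mu$ a.e. (left-continuity issues only occur on a null set of $u$).

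\textbf{Step 1: the coupling is affine in quantile coordinates.} Let $\mu$ be atomless. Then $F^-_\mu=F_\mu$ is continuous and $F_\mu(Q_\mu(u))=u$ for every $u\in(0,1)$. The map $g_{\epsilon,\mu}$ is nondecreasing (indeed strictly increasing), so
\[
Q_{(g_{\epsilon,\mu})_*\mu}(u)=(1-\epsilon)Q_\mu(u)+\epsilon u .
\]
Since $T$ is increasing,
\[
Q_{\mathcal L\mu}(u)=T\bigl((1-\epsilon)Q_\mu(u)+\epsilon u\bigr).
\]

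\textbf{Step 2: the class of atomless measures is invariant.} We need $\mathcal L\mu$ to be atomless, and in fact $AC(I)$ to be invariant, so that the iterates stay in the good class. The map $g_{\epsilon,\mu}$ has Lipschitz inverse, since $g_{\epsilon,\mu}(y)-g_{\epsilon,\mu}(x)\ge(1-\epsilon)(y-x)$ for $x<y$. Also $T$ is a $C^1$ diffeomorphism onto its image. Hence preimages of Lebesgue-null sets are null, and $\mathcal L\mu\in AC(I)$ whenever $\mu\in AC(I)$. The same argument shows that atomless measures are mapped to atomless measures.

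\textbf{Step 3: contraction.} For atomless $\mu,\nu$, Step 1 and $\operatorname{Lip}(T)\le\lambda^{-1}$ give
\[
|Q_{\mathcal L\mu}-Q_{\mathcal L\nu}|\le\lambda^{-1}(1-\epsilon)|Q_\mu-Q_\nu|
\]
pointwise. Integrating over $u$ yields $d_W(\mathcal L\mu,\mathcal L\nu)\le\sigma_\epsilon d_W(\mu,\nu)$.

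\textbf{Step 4: existence of the fixed point in $AC(I)$.} This is the main obstacle, because $AC(I)$ is not $d_W$-closed, so Banach's fixed point theorem cannot be applied directly there. We would instead construct the fixed point explicitly. For each $u\in[0,1]$ the map $q\mapsto T((1-\epsilon)q+\epsilon u)$ is a contraction of $I$ with constant $\sigma_\epsilon$. It therefore has a unique fixed point $Q^\epsilon(u)$, which solves $Q^\epsilon(u)=T((1-\epsilon)Q^\epsilon(u)+\epsilon u)$.

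We then study the regularity of $Q^\epsilon$ via the implicit function theorem. Set $\Phi(q,u)=q-T((1-\epsilon)q+\epsilon u)$. Then $\partial_q\Phi\ge 1-\sigma_\epsilon>0$, so $Q^\epsilon$ is $C^1$ and
\[
(Q^\epsilon)'=\frac{\epsilon T'}{1-(1-\epsilon)T'}.
\]
This derivative lies in $[c,C]$ for some constants $0<c\le C$, using that $T'$ is continuous and positive on compact $I$. So $Q^\epsilon$ is a strictly increasing $C^1$ diffeomorphism onto its image. Define $\mu^\epsilon:=(Q^\epsilon)_*m$. It has quantile function $Q^\epsilon$, and its density equals $1/(Q^\epsilon)'\circ(Q^\epsilon)^{-1}$, which is bounded; hence $\mu^\epsilon\in AC(I)$. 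By Step 1, $Q_{\mathcal L\mu^\epsilon}=Q^\epsilon$, so $\mathcal L\mu^\epsilon=\mu^\epsilon$.

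\textbf{Step 5: rate and uniqueness.} For $\mu_0\in AC(I)$, Step 2 keeps all iterates in $AC(I)$. Iterating Step 3 with $\nu=\mu^\epsilon$ gives
\[
d_W(\mathcal L^n\mu_0,\mu^\epsilon)\le\sigma_\epsilon^n\, d_W(\mu_0,\mu^\epsilon),
\]
and Remark~\ref{rmk:weak} turns this into weak convergence. Hence $\mu^\epsilon$ is physical.

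For uniqueness, let $\tilde\mu$ be any physical invariant measure. Applying Definition~\ref{def:physical} with $\mu_0=m$, the iterates converge weakly to both $\tilde\mu$ and $\mu^\epsilon$. Limits are unique, so $\tilde\mu=\mu^\epsilon$.
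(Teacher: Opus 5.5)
Your proof is correct, but it is organized differently from the paper's.

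\textbf{Overall architecture.} The paper first extends $\mathcal L|_{AC(I)}$ by density to a contraction $\overline{\mathcal L}$ on the complete space $(\mathcal P(I),d_W)$ (Lemma~\ref{lem:self}). It gets the attracting limit $\overline\mu^\epsilon$ from Banach's theorem there. Only afterwards does it use the quantile fixed point (Lemma~\ref{lem:quantile-fixed-point}) to show that this limit is absolutely continuous and invariant for the original $\mathcal L$ (Corollary~\ref{cor:corunique}). You handle the fact that $AC(I)$ is not closed more directly. Once you have an invariant $\mu^\epsilon\in AC(I)$, the contraction applied to the pair $(\mathcal L^{n-1}\mu_0,\mu^\epsilon)$ already gives the rate. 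The extension and the identification $\overline\mu^\epsilon=\mu^\epsilon$ then become unnecessary.

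\textbf{Contraction step.} You prove the contraction entirely in quantile coordinates: a pointwise bound on $Q_{\mathcal L\mu}-Q_{\mathcal L\nu}$, then integration. The paper instead uses Lemma~\ref{lem:coupling-contraction} plus Kantorovich--Rubinstein duality for the $T$-step. Because you state the bound for all atomless measures, it also immediately gives the paper's uniqueness of the nonatomic invariant measure.

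\textbf{Construction of $Q^\epsilon$.} You solve pointwise and then use the implicit function theorem. This gives more than the paper's bi-Lipschitz bounds: $Q^\epsilon\in C^1$ with $(Q^\epsilon)'=\epsilon T'/(1-(1-\epsilon)T')$, evaluated at $(1-\epsilon)Q^\epsilon(u)+\epsilon u$. Hence you get an explicit continuous density, with $\tau\epsilon\le (Q^\epsilon)'\le M_\epsilon$, matching Theorem~\ref{thm:fixedpoint-approx}.

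One small technical point: $T$ is only defined on the closed interval $I$. At the endpoints you therefore need a one-sided version of the implicit function theorem or a $C^1$ extension of $T$. Alternatively, apply the mean value theorem to the fixed-point equation, as the paper does for its two-sided Lipschitz bounds. This gives the difference quotient $\epsilon T'(\xi)/(1-(1-\epsilon)T'(\xi))$, and hence the same derivative, once continuity of $Q^\epsilon$ is known. Continuity itself follows from that equation.

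\textbf{What the paper's route buys.} It yields the global contraction $\overline{\mathcal L}$ on all of $\mathcal P(I)$. This presents $\mu^\epsilon$ as the unique fixed point of the natural continuous extension, and contrasts it with the atomic invariant measures of $\mathcal L$ itself (Remark~\ref{rmk:nonunique}). For the theorem as stated, this is not needed.
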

The proof of Theorem~\ref{thm:main-physical} is completed at the end of the section. The strategy is to first establish the contraction estimate and construct
the common attracting limit through the continuous extension of
\(\mathcal L|_{AC(I)}\), and then characterize the physical measure through the quantile function, so that Theorem \ref{thm:main-physical} will be a direct consequence of the following
contraction estimate.

\begin{proposition}\label{prop:equil}
Fix \(\epsilon\in(0,1)\). Then
\[
\mc L(AC(I))\subset AC(I),
\]
and for every \(\mu,\nu\in AC(I)\),
\[
d_W(\mc L\mu,\mc L\nu)
\le
\lambda^{-1}(1-\epsilon)d_W(\mu,\nu).
\]
In particular, \(\mc L\) is a strict contraction on \(AC(I)\), with contraction
rate
\[
\sigma_\epsilon:=\lambda^{-1}(1-\epsilon)\in(0,1).
\]
\end{proposition}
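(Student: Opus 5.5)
We plan to work in quantile coordinates, where the Wasserstein distance on the line has an explicit form. For \(\mu\in\mathcal P(I)\) let \(Q_\mu(u):=\inf\{x\in I: F_\mu(x)\ge u\}\), \(u\in(0,1)\), be the (left-continuous) quantile function, where \(F_\mu(x)=\mu([0,x])\). We will use two standard one-dimensional facts. First, \((Q_\mu)_*m=\mu\). Second,
\[
d_W(\mu,\nu)=\int_0^1|Q_\mu(u)-Q_\nu(u)|\,du .
\]
Both are classical; see \cite{Villani2009,Santambrogio2015}.

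\textbf{Invariance of \(AC(I)\).} Let \(\mu\in AC(I)\). Then \(F_\mu^-=F_\mu\) is continuous and nondecreasing, so \(g:=g_{\epsilon,\mu}\) is continuous with \(g(y)-g(x)\ge(1-\epsilon)(y-x)\) for \(x<y\). Hence \(g\) is a bi-Lipschitz homeomorphism of \(I\), and its inverse is \((1-\epsilon)^{-1}\)-Lipschitz. For a Lebesgue-null set \(A\), the set \(g^{-1}(A)\) is again null, so \(g_*\mu(A)=\mu(g^{-1}(A))=0\). Thus \(g_*\mu\in AC(I)\). Since \(T\) is a \(C^1\) diffeomorphism onto its image with \(T'>0\), \(T^{-1}\) is \(C^1\) on \(T(I)\) and maps null sets to null sets. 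It follows that \(\mathcal L\mu=T_*g_*\mu\in AC(I)\).

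\textbf{Key identity.} For atomless \(\mu\) we claim
\[
Q_{g_*\mu}(u)=(1-\epsilon)Q_\mu(u)+\epsilon u .
\]
We would prove it as follows. For a strictly increasing continuous map \(h\), the function \(h\circ Q_\mu\) is nondecreasing and left-continuous, and \((h\circ Q_\mu)_*m=h_*\mu\). A nondecreasing left-continuous function pushing \(m\) to a measure \(\rho\) must coincide with \(Q_\rho\). Therefore \(Q_{h_*\mu}=h\circ Q_\mu\).

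Applying this with \(h=g\) reduces the claim to \(F_\mu(Q_\mu(u))=u\) for all \(u\in(0,1)\). This holds because \(F_\mu\) is continuous, so \(F_\mu(Q_\mu(u))\ge u\) by right-continuity and \(F_\mu(Q_\mu(u))\le u\) by left-continuity at \(Q_\mu(u)\).

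This is the step I expect to need the most care. The subtle point is that flat pieces of \(F_\mu\) (gaps in the support) make \(Q_\mu\) jump. One must check that the identity still holds at every \(u\), or at least for a.e. \(u\), which is enough. Since \(g\) is strictly increasing, the uniqueness argument above handles this. Applying the same lemma with \(h=T\) gives \(Q_{\mathcal L\mu}=T\circ\bigl((1-\epsilon)Q_\mu+\epsilon\,\mathrm{id}\bigr)\).

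\textbf{Contraction.} For \(\mu,\nu\in AC(I)\) we then have
\[
d_W(\mathcal L\mu,\mathcal L\nu)=\int_0^1\bigl|T((1-\epsilon)Q_\mu+\epsilon u)-T((1-\epsilon)Q_\nu+\epsilon u)\bigr|\,du .
\]
Since \(\operatorname{Lip}(T)\le\lambda^{-1}\), the integrand is at most \(\lambda^{-1}(1-\epsilon)|Q_\mu(u)-Q_\nu(u)|\). Integrating in \(u\) gives \(d_W(\mathcal L\mu,\mathcal L\nu)\le\lambda^{-1}(1-\epsilon)d_W(\mu,\nu)\). Finally, \(\sigma_\epsilon=\lambda^{-1}(1-\epsilon)\in(0,1)\) because \(\lambda>1\) and \(\epsilon\in(0,1)\).
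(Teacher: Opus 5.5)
Your proof is correct and follows essentially the paper's route: the same Lipschitz-inverse argument gives $\mathcal L(AC(I))\subset AC(I)$, and the same quantile linearization $Q_{(g_{\epsilon,\mu})_*\mu}=(1-\epsilon)Q_\mu+\epsilon u$ is the paper's Lemma~\ref{lem:coupling-contraction}. The differences are only technical: you derive $Q_{h_*\mu}=h\circ Q_\mu$ from the uniqueness of nondecreasing left-continuous maps pushing $m$ to a given measure, where the paper uses a direct CDF computation (Lemma~\ref{lem:quantile-push}), and you bound the $T$-step pointwise in quantile coordinates, where the paper uses Kantorovich--Rubinstein duality.
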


%%%%%%%%%%%%%%%

Assuming Proposition~\ref{prop:equil}, whose proof is postponed to
Subsection~\ref{sec:proof-prop-equil}, we prove the following lemma.

\begin{lemma}\label{lem:self}
For every \(\epsilon\in(0,1)\), the restriction
\(\mathcal L|_{AC(I)}\) admits a unique continuous extension
\[
\overline{\mathcal L}:\mathcal P(I)\to \mathcal P(I),
\]
and \(\overline{\mathcal L}\) has a unique fixed point
\(\overline\mu^\epsilon\in \mathcal P(I)\). Moreover, for every
\(\mu_0\in AC(I)\),
\[
d_W\bigl(\mathcal L^n\mu_0,\overline\mu^\epsilon\bigr)
\le
\sigma_\epsilon^n
d_W(\mu_0,\overline\mu^\epsilon),
\qquad n\ge0.
\]
In particular,
\[
\mathcal L^n\mu_0\longrightarrow\overline\mu^\epsilon
\]
in \(d_W\), and hence weakly, for every \(\mu_0\in AC(I)\).
\end{lemma}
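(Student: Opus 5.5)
The argument rests on three standard facts: $AC(I)$ is dense in $(\mathcal P(I),d_W)$, $(\mathcal P(I),d_W)$ is complete (indeed compact, by Remark~\ref{rmk:weak} and Prokhorov), and Proposition~\ref{prop:equil} makes $\mathcal L|_{AC(I)}$ a $\sigma_\epsilon$-Lipschitz map. A uniformly continuous map from a dense subset into a complete metric space extends uniquely to a continuous map on the closure. The extension inherits the same Lipschitz constant, so Banach's fixed point theorem applies.

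\textbf{Step 1: density of $AC(I)$.} Given $\mu\in\mathcal P(I)$, I will use the convolution-type approximation $\mu_\delta := \mu * \eta_\delta$. Here $\eta_\delta$ is the uniform density on $[0,\delta]$, the support is folded back into $I$ by the map $x\mapsto \min(x,1)$, and that map is replaced by a slight rescaling $x\mapsto (1-\delta)x + \delta U$ with $U$ uniform on $[0,1]$. Concretely, $\mu_\delta$ is the law of $(1-\delta)X+\delta U$ with $X\sim\mu$ and $U$ independent of $X$. This law is absolutely continuous because it is a convolution with a density. The coupling $(X,(1-\delta)X+\delta U)$ gives $d_W(\mu,\mu_\delta)\le 2\delta$. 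Hence $AC(I)$ is $d_W$-dense.

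\textbf{Step 2: extension.} For $\mu\in\mathcal P(I)$, pick $\mu_k\in AC(I)$ with $\mu_k\to\mu$. Proposition~\ref{prop:equil} gives
\[
d_W(\mathcal L\mu_k,\mathcal L\mu_j)\le\sigma_\epsilon d_W(\mu_k,\mu_j),
\]
so $(\mathcal L\mu_k)$ is Cauchy and converges, by completeness, to a limit I define as $\overline{\mathcal L}\mu$. Interleaving two approximating sequences shows the limit is independent of the choice. Passing to the limit in the contraction estimate gives
\[
d_W(\overline{\mathcal L}\mu,\overline{\mathcal L}\nu)\le\sigma_\epsilon d_W(\mu,\nu)
\]
for all $\mu,\nu\in\mathcal P(I)$. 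Thus $\overline{\mathcal L}$ is continuous and agrees with $\mathcal L$ on $AC(I)$. Uniqueness of the extension follows because two continuous maps agreeing on a dense set coincide.

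\textbf{Step 3: fixed point and convergence.} Since $\overline{\mathcal L}$ is a strict contraction on the complete metric space $\mathcal P(I)$, Banach's theorem gives a unique fixed point $\overline\mu^\epsilon$. For $\mu_0\in AC(I)$, Proposition~\ref{prop:equil} gives $\mathcal L(AC(I))\subset AC(I)$, so $\mathcal L^n\mu_0=\overline{\mathcal L}^n\mu_0$. Induction then yields
\[
d_W(\mathcal L^n\mu_0,\overline\mu^\epsilon)=d_W(\overline{\mathcal L}^n\mu_0,\overline{\mathcal L}^n\overline\mu^\epsilon)\le\sigma_\epsilon^n d_W(\mu_0,\overline\mu^\epsilon).
\]
Weak convergence follows from Remark~\ref{rmk:weak}.

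\textbf{Main obstacle.} The delicate point is conceptual rather than technical: $\overline{\mathcal L}$ need not coincide with $\mathcal L$ off $AC(I)$. For instance, Dirac masses at fixed points of $T$ are $\mathcal L$-invariant, and $\mathcal L$ is not continuous there. The lemma therefore claims nothing about $\mathcal L$ on atomic measures, and I must take care to use only that $\overline{\mathcal L}=\mathcal L$ on $AC(I)$. Completeness of $(\mathcal P(I),d_W)$ I take as standard, from compactness of $\mathcal P(I)$ in the weak topology together with Remark~\ref{rmk:weak}.
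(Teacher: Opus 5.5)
Your proposal is correct and follows the same route as the paper: density of $AC(I)$ in $(\mathcal P(I),d_W)$, unique extension of the $\sigma_\epsilon$-Lipschitz map $\mathcal L|_{AC(I)}$, Banach's theorem on the complete space $\mathcal P(I)$, and $\overline{\mathcal L}^{\,n}\mu_0=\mathcal L^n\mu_0$ from $\mathcal L(AC(I))\subset AC(I)$; you also supply the density and extension details that the paper takes for granted. One small correction to your side remark, which the argument does not use: the invariant Dirac mass sits at the fixed point $a_\epsilon$ of $a\mapsto T((1-\epsilon)a)$, as in Remark~\ref{rmk:nonunique}, and not at a fixed point of $T$ unless that fixed point is $0$.
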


\begin{proof}
Since \(AC(I)\) is dense in \(\mathcal P(I)\) with respect to \(d_W\), the
contraction
\[
\mathcal L|_{AC(I)}:AC(I)\to AC(I)
\]
admits a unique continuous extension
\[
\overline{\mathcal L}:\mathcal P(I)\to \mathcal P(I).
\]
The extension satisfies
\[
d_W(\overline{\mathcal L}\mu,\overline{\mathcal L}\nu)
\le
\sigma_\epsilon d_W(\mu,\nu),
\qquad \mu,\nu\in \mathcal P(I).
\]
Since \((\mathcal P(I),d_W)\) is complete, Banach's fixed-point theorem yields a
unique fixed point \(\overline\mu^\epsilon\in \mathcal P(I)\) of
\(\overline{\mathcal L}\), and
\[
d_W\bigl(\overline{\mathcal L}^{\,n}\mu_0,\overline\mu^\epsilon\bigr)
\le
\sigma_\epsilon^n d_W(\mu_0,\overline\mu^\epsilon).
\]
For \(\mu_0\in AC(I)\), Proposition~\ref{prop:equil} implies that
\(\mathcal L^n\mu_0\in AC(I)\) for every \(n\), and hence
\[
\overline{\mathcal L}^{\,n}\mu_0=\mathcal L^n\mu_0.
\]
The conclusion follows.
\end{proof}

The previous lemma identifies a unique probability measure attracting all
absolutely continuous initial distributions. However, it does not yet show
that this measure is invariant under the original self-consistent transfer
operator. Indeed, the previous lemma concerns the continuous extension
\(\overline{\mathcal L}\) of the restriction
\(\mathcal L|_{AC(I)}\), whereas the original STO on \(\mathcal P(I)\) may admit
additional atomic invariant measures (we exhibit an example later in Remark \ref{rmk:nonunique}). We therefore turn to a quantile
characterization of the attracting limit. This will show that the limit is
absolutely continuous and satisfies the fixed-point equation for the original
STO, thereby identifying it as the unique physical invariant measure.

%%%%%%%%%%%%%%%%

%%%%%%%%%%%%%%%%%%%%
%%%%quantile%%%%
\subsection{Quantile characterization of the physical measure}
For $\mu\in \mathcal P(I)$, we denote by
\begin{equation}\label{def:pseudo}
Q_\mu(u):=\inf\{x\in I:\ F_\mu(x)\ge u\},
\qquad u\in(0,1],
\end{equation}
the pseudo-inverse, or quantile function, of the cumulative distribution function
\[
F_\mu(x):=\mu([0,x]).
\]
At $u=0$, we set $Q_\mu(0):=\lim_{u\downarrow0}Q_\mu(u)$. With this convention, in dimension one we have
\begin{equation}\label{eq:d_w-dim1}
d_W(\mu,\nu)=\int_0^1 |Q_\mu(u)-Q_\nu(u)|\,du.
\end{equation}
Finally, note that when $\mu$ is nonatomic, $\mu([0,x))$ and $\mu([0,x])$ coincide.

Recalling \eqref{def:pseudo}, we prove the following easy fact on $Q_\mu$ which is the key to prove the next Lemma.\footnote{Although throughout this section $T$ is assumed to be a $C^1$
 strictly increasing embedding, the following lemma holds under the weaker assumptions of monotonicity and left continuity. We state it in this more general form since it may be of independent interest.}

\begin{lemma}\label{lem:quantile-push}
Let $T:I\to I$ be a non-decreasing left-continuous map. For every $\mu\in \mathcal P(I)$ and every $u\in(0,1)$,
\[
Q_{T_*\mu}(u)=T\big(Q_\mu(u)\big).
\]
\end{lemma}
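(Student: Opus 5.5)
The plan is to use the Galois-connection characterization of the quantile function: for any $\nu\in\mathcal P(I)$, $u\in(0,1)$ and $y\in I$,
\[
Q_\nu(u)\le y \iff F_\nu(y)\ge u .
\]
This follows from \eqref{def:pseudo}, because $F_\nu$ is nondecreasing and right-continuous, so the infimum in the definition of $Q_\nu(u)$ is attained. Applying it with $\nu=T_*\mu$, it suffices to show that for every $y\in I$,
\[
F_{T_*\mu}(y)\ge u \iff T\big(Q_\mu(u)\big)\le y .
\]
Once this holds, $Q_{T_*\mu}(u)=\inf\{y\in I: y\ge T(Q_\mu(u))\}=T(Q_\mu(u))$, since $T(Q_\mu(u))\in I$.

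The key step is to describe the sublevel sets of $T$. Fix $y\in I$ and set $S_y:=T^{-1}([0,y])=\{x\in I:T(x)\le y\}$. Because $T$ is nondecreasing, $S_y$ is an initial segment of $I$: if $x\in S_y$ and $x'\le x$, then $x'\in S_y$. I claim that $S_y$ is either empty or a closed interval $[0,a_y]$ with $a_y:=\sup S_y$. To see this, take $x_n\in S_y$ with $x_n\uparrow a_y$. If $a_y$ is not already one of the $x_n$, left-continuity gives $T(a_y)=\lim T(x_n)\le y$, so $a_y\in S_y$. This is the only place where left-continuity is used, and it is the point I expect to need care: without it, $S_y$ could be half-open, and the identity can fail at jump points of $T$.

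With this description, $F_{T_*\mu}(y)=\mu(S_y)$. In the nonempty case this equals $F_\mu(a_y)$, and the Galois connection for $\mu$ gives
\[
F_\mu(a_y)\ge u \iff Q_\mu(u)\le a_y \iff Q_\mu(u)\in S_y \iff T(Q_\mu(u))\le y .
\]
In the empty case, $F_{T_*\mu}(y)=0<u$. At the same time $T(x)>y$ for all $x$, in particular $T(Q_\mu(u))>y$, so both sides of the equivalence are false. Both cases together establish the equivalence, and the identity follows as explained above. The restriction $u\in(0,1)$ only guarantees that $Q_\mu(u)$ is a genuine point of $I$ given by the infimum formula, so no boundary convention at $u=0$ is needed.
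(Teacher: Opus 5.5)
Your proof is correct and follows essentially the same route as the paper. Both arguments use left-continuity to identify $T^{-1}([0,y])$ as the closed initial segment $[0,\alpha(y)]$, chain $F_\mu(\alpha(y))\ge u \iff \alpha(y)\ge Q_\mu(u) \iff T(Q_\mu(u))\le y$, and take the infimum over $y$. The differences are cosmetic: you handle the empty-preimage case by showing both sides of the equivalence are false, where the paper restricts the infimum to $y\in[T(0),1]$, and you make explicit the right-continuity of $F_\mu$ that the paper uses implicitly in asserting $F_\mu(Q_\mu(u))\ge u$.
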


\begin{proof}
Set $\nu:=T_*\mu$. For $y\in I$ we have, by definition of pushforward,
\[
F_\nu(y)=\nu([0,y])=(T_*\mu)([0,y])=\mu\big(T^{-1}([0,y])\big).
\]
If $y<T(0)$, the preimage is empty and $F_\nu(y)=0$. For $y\in[T(0),1]$, since $T$ is nondecreasing and left-continuous, the preimage $T^{-1}([0,y])$ is an interval of the form
$[0,\alpha(y)]$, where
\[
\alpha(y):=\sup\{x\in I:\ T(x)\le y\}.
\]
Hence
\begin{equation}\label{eq:CDF-push}
F_\nu(y)=\mu([0,\alpha(y)])=F_\mu(\alpha(y)).
\end{equation}
By definition,
\[
Q_\nu(u)=\inf\{y\in I:\ F_\nu(y)\ge u\}
=\inf\{y\in[T(0),1]:\ F_\mu(\alpha(y))\ge u\}.
\]
We claim that, for every $y\in[T(0),1]$,
\begin{equation}\label{eq:equiv-alpha}
F_\mu(\alpha(y))\ge u
\quad\Longleftrightarrow\quad
\alpha(y)\ge Q_\mu(u).
\end{equation}
Indeed, if $\alpha(y)<Q_\mu(u)$ then by minimality of $Q_\mu(u)$ we have
$F_\mu(\alpha(y))<u$; conversely if $\alpha(y)\ge Q_\mu(u)$, by monotonicity of $F_\mu$,
$F_\mu(\alpha(y))\ge F_\mu(Q_\mu(u))\ge u$.

Next, for $y\in[T(0),1]$, by the definition of $\alpha(y)$ and monotonicity of $T$ we have, for every $x\in I$,
\begin{equation}\label{eq:alpha-order}
\alpha(y)\ge x \quad\Longleftrightarrow\quad T(x)\le y.
\end{equation}
Applying \eqref{eq:alpha-order} with $x=Q_\mu(u)$ and combining with \eqref{eq:equiv-alpha}, we obtain
\[
F_\mu(\alpha(y))\ge u
\quad\Longleftrightarrow\quad
\alpha(y)\ge Q_\mu(u)
\quad\Longleftrightarrow\quad
T(Q_\mu(u))\le y.
\]
Therefore,
\[
Q_\nu(u)=\inf\{y\in[T(0),1]:\ T(Q_\mu(u))\le y\}=T(Q_\mu(u)),
\]
which proves the lemma.
\end{proof}

The following lemma characterizes the physical invariant measure through its
quantile function.
\begin{lemma}\label{lem:quantile-fixed-point}
There exists a unique nondecreasing measurable function \(Q^\epsilon:[0,1]\to I\)
satisfying
\[
Q^\epsilon(u)
=
T\big((1-\epsilon)Q^\epsilon(u)+\epsilon u\big),
\qquad u\in[0,1].
\]
Moreover, \(Q^\epsilon\) is strictly increasing and the probability measure \(\mu^\epsilon\) with \(Q^\epsilon\) as quantile function is absolutely continuous.
\end{lemma}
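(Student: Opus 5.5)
The plan is to solve the fixed-point equation pointwise in \(u\), one value at a time. For fixed \(u\in[0,1]\), consider the scalar map
\[
\Phi_u(q):=T\bigl((1-\epsilon)q+\epsilon u\bigr),\qquad q\in I.
\]
Since \((1-\epsilon)q+\epsilon u\in I\) for \(q,u\in I\), the map \(\Phi_u\) sends \(I\) into \(I\). Moreover,
\[
|\Phi_u(q)-\Phi_u(q')|\le \lambda^{-1}(1-\epsilon)|q-q'|=\sigma_\epsilon|q-q'|.
\]
So \(\Phi_u\) is a strict contraction of the complete space \(I\), and Banach's theorem gives a unique fixed point \(Q^\epsilon(u)\).

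This settles uniqueness in a strong form: any function satisfying the equation, monotone or not, must agree pointwise with \(Q^\epsilon\). The monotonicity and measurability requirements in the statement therefore only restrict the class of admissible functions; they play no role in the uniqueness argument.

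Next comes regularity of \(u\mapsto Q^\epsilon(u)\). I would subtract the equations at \(u\) and \(v\) and use the mean value theorem for \(T\). Writing
\[
Q^\epsilon(u)-Q^\epsilon(v)=T'(\xi)\bigl[(1-\epsilon)(Q^\epsilon(u)-Q^\epsilon(v))+\epsilon(u-v)\bigr]
\]
for some \(\xi\) between the two arguments, and solving, we get
\[
Q^\epsilon(u)-Q^\epsilon(v)=\frac{\epsilon\,T'(\xi)}{1-(1-\epsilon)T'(\xi)}\,(u-v).
\]
Since \(0<T'(\xi)\le\lambda^{-1}<1\), the coefficient is strictly positive and bounded above by
\[
\frac{\epsilon\lambda^{-1}}{1-(1-\epsilon)\lambda^{-1}}=:K_\epsilon.
\]
Hence \(Q^\epsilon\) is strictly increasing and \(K_\epsilon\)-Lipschitz, which in particular makes it continuous and measurable.

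Alternatively, one can obtain \(C^1\) regularity by applying the implicit function theorem to
\[
G(q,u)=q-T\bigl((1-\epsilon)q+\epsilon u\bigr),
\]
since \(\partial_q G\ge 1-(1-\epsilon)\lambda^{-1}>0\). This even gives the lower bound
\[
(Q^\epsilon)'(u)\ge \frac{\epsilon\min T'}{1-(1-\epsilon)\min T'}>0.
\]
The minimum of \(T'\) is positive because \(T'\) is continuous and positive on the compact interval \(I\).

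Finally, I would define \(\mu^\epsilon:=(Q^\epsilon)_*m\), the law of \(Q^\epsilon(U)\) with \(U\) uniform on \([0,1]\). Since \(Q^\epsilon\) is nondecreasing and left-continuous (indeed continuous), its quantile function is exactly \(Q^\epsilon\). To check this, I would apply Lemma~\ref{lem:quantile-push} with \(\mu=m\), using \(Q_m(u)=u\).

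Absolute continuity is the step needing the most care. Because \(Q^\epsilon\) is strictly increasing, the measure \(\mu^\epsilon\) has no atoms. That alone is not enough, so I need the bi-Lipschitz property. The lower bound on \((Q^\epsilon)'\) shows that the inverse \(F:=(Q^\epsilon)^{-1}\), defined on \([Q^\epsilon(0),Q^\epsilon(1)]\), is Lipschitz. Then, for any Borel set \(A\),
\[
\mu^\epsilon(A)=m\bigl(F(A\cap[Q^\epsilon(0),Q^\epsilon(1)])\bigr)\le \operatorname{Lip}(F)\,m(A).
\]
So \(\mu^\epsilon\ll m\), with density bounded by \(\operatorname{Lip}(F)=1/\min(Q^\epsilon)'\).

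The main obstacle is exactly this lower derivative bound, which requires \(\min T'>0\). Strict monotonicity of \(Q^\epsilon\) by itself would not exclude a singular continuous measure. Here the bound is supplied by the \(C^1\) assumption on \(T\) on the compact interval \(I\).
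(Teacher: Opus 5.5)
Your proof is correct. Its skeleton matches the paper's: a contraction gives existence and uniqueness, $\min_I T'>0$ gives a lower Lipschitz bound on $Q^\epsilon$, and that makes the CDF Lipschitz. The organization differs in a useful way.

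The paper obtains existence by running Banach's theorem on the space $\mathcal Q$ of continuous nondecreasing maps with the sup norm. Continuity and monotonicity of $Q^\epsilon$ are then inherited from the closedness of $\mathcal Q$, and the pointwise contraction is used only for uniqueness. The paper then needs that a priori monotonicity to discard the nonnegative term $(1-\epsilon)\bigl(Q^\epsilon(v)-Q^\epsilon(u)\bigr)$ and reach $Q^\epsilon(v)-Q^\epsilon(u)\ge\tau\epsilon(v-u)$.

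You solve the scalar problem separately for each $u$. This gives existence and uniqueness at once, with no function space. You then recover monotonicity and regularity from the exact mean-value identity for the difference quotient. That identity is the more informative tool: with $\tau=\min_I T'$ it gives
\[
\frac{\epsilon\tau}{1-(1-\epsilon)\tau}\le\frac{Q^\epsilon(v)-Q^\epsilon(u)}{v-u}\le K_\epsilon .
\]
Here $K_\epsilon$ is exactly the constant $M_\epsilon$ of Theorem~\ref{thm:fixedpoint-approx}, and the lower constant is slightly better than the paper's $\tau\epsilon$.

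In particular, you do not need the implicit function theorem. The lower bound you use for absolute continuity already follows from the mean-value identity with $T'(\xi)\ge\tau$. This also spares you the routine extension of $T$ beyond $I$ that the implicit function theorem requires at boundary points of $I$.

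Your identification of the quantile function through Lemma~\ref{lem:quantile-push} with $\mu=m$ is equivalent to the paper's direct argument. So is your bound on $\mu^\epsilon(A)$ via the Lipschitz image of $A$. The paper instead observes that $F_{\mu^\epsilon}=(Q^\epsilon)^{-1}$ on $[Q^\epsilon(0),Q^\epsilon(1)]$ and that it is $1/(\tau\epsilon)$-Lipschitz.
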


\begin{proof}
Let \(\mathcal Q\) be the set of continuous nondecreasing maps
\(Q:[0,1]\to I\), endowed with the supremum norm. This is a nonempty closed subset of \(C([0,1],\mathbb R)\), hence a complete metric space. Define
\[
(\mathcal FQ)(u)
:=
T\big((1-\epsilon)Q(u)+\epsilon u\big).
\]
Since \(T\) is continuous and increasing, \(\mathcal F\) maps \(\mathcal Q\) into itself.
Moreover, for any \(Q_1,Q_2\in\mathcal Q\),
\[
\|\mathcal FQ_1-\mathcal FQ_2\|_\infty
\le
\lambda^{-1}(1-\epsilon)\|Q_1-Q_2\|_\infty.
\]
Thus Banach's fixed-point theorem gives a unique fixed point
\(Q^\epsilon\in\mathcal Q\). If \(\widetilde Q:[0,1]\to I\) is any other measurable solution, then for every \(u\in[0,1]\),
\[
|\widetilde Q(u)-Q^\epsilon(u)|
\le \lambda^{-1}(1-\epsilon)|\widetilde Q(u)-Q^\epsilon(u)|,
\]
so \(\widetilde Q=Q^\epsilon\) pointwise.

Let \(\mu^\epsilon:=(Q^\epsilon)_*m\), where \(m\) is Lebesgue measure on \([0,1]\). We show that \(Q^\epsilon\) is its quantile function and \(\mu^\epsilon\in AC(I)\). Let
\[
\tau:=\inf_{x\in I}T'(x)>0.
\]
For \(0\le u<v\le1\), using the fixed-point equation and the monotonicity of
\(Q^\epsilon\), we obtain
\[
\begin{aligned}
Q^\epsilon(v)-Q^\epsilon(u)
&=
T\big((1-\epsilon)Q^\epsilon(v)+\epsilon v\big)
-
T\big((1-\epsilon)Q^\epsilon(u)+\epsilon u\big)\\
&\ge
\tau\Big((1-\epsilon)\big(Q^\epsilon(v)-Q^\epsilon(u)\big)
+\epsilon(v-u)\Big)\\
&\ge
\tau\epsilon(v-u).
\end{aligned}
\]
Hence \(Q^\epsilon\) is continuous and strictly increasing. The CDF of \(\mu^\epsilon\) equals \((Q^\epsilon)^{-1}\) on \([Q^\epsilon(0),Q^\epsilon(1)]\), extended by \(0\) below this interval and \(1\) above it. In particular, \(Q^\epsilon\) is the quantile function of \(\mu^\epsilon\) with our endpoint convention. The above lower Lipschitz bound implies
\[
F_{\mu^\epsilon}(y)-F_{\mu^\epsilon}(x)
\le
\frac{1}{\tau\epsilon}(y-x),
\qquad 0\le x<y\le1.
\]
Thus \(F_{\mu^\epsilon}\) is Lipschitz, and therefore
\(\mu^\epsilon\ll m\).
\end{proof}

%%%%
\begin{corollary}\label{cor:corunique}
The measure \(\mu^\epsilon\) constructed in
Lemma~\ref{lem:quantile-fixed-point} is invariant under \(\mathcal L\)
and is its unique nonatomic invariant probability measure. Moreover,
it coincides with the attracting measure \(\overline\mu^\epsilon\) of
Lemma~\ref{lem:self}.
\end{corollary}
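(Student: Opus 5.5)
The plan is to work in quantile coordinates. The key identity is that for atomless $\mu$ the coupling acts on quantiles as
\[
Q_{(g_{\epsilon,\mu})_*\mu}(u)=(1-\epsilon)Q_\mu(u)+\epsilon u .
\]
Combined with Lemma~\ref{lem:quantile-push} applied to $T$, this reduces invariance to the scalar equation of Lemma~\ref{lem:quantile-fixed-point}.

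\textbf{Step 1 (the identity).} Let $\mu$ be nonatomic. Then $F_\mu^-=F_\mu$ is continuous and $F_\mu(Q_\mu(u))=u$ for $u\in(0,1)$. Hence
\[
g_{\epsilon,\mu}(Q_\mu(u))=(1-\epsilon)Q_\mu(u)+\epsilon u.
\]
Since $Q_\mu$ pushes Lebesgue measure $m$ forward to $\mu$, we get $(g_{\epsilon,\mu})_*\mu=(g_{\epsilon,\mu}\circ Q_\mu)_*m$. The map $u\mapsto (1-\epsilon)Q_\mu(u)+\epsilon u$ is strictly increasing and left-continuous, being a combination of a left-continuous nondecreasing function and the identity. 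The quantile function of $h_*m$, for such an $h$, is $h$ itself; this is the $\mu=m$ case of Lemma~\ref{lem:quantile-push}, since $Q_m(u)=u$. It follows that the quantile of $(g_{\epsilon,\mu})_*\mu$ is $(1-\epsilon)Q_\mu+\epsilon\,\mathrm{id}$ on $(0,1)$. Applying Lemma~\ref{lem:quantile-push} with $T$ continuous and increasing then gives
\[
Q_{\mathcal L\mu}(u)=T\big((1-\epsilon)Q_\mu(u)+\epsilon u\big),\qquad u\in(0,1).
\]
This step is where care is needed: the identity $F_\mu(Q_\mu(u))=u$ is exactly where nonatomicity enters, and one must check that no mass is lost or misplaced at plateaus of $F_\mu$. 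Plateaus of $F_\mu$ are $\mu$-null, so they do not matter.

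\textbf{Step 2 (invariance and uniqueness).} Since $\mu^\epsilon$ is absolutely continuous, it is nonatomic. By Step 1 and the fixed-point equation, $Q_{\mathcal L\mu^\epsilon}=Q^\epsilon$ on $(0,1)$. Quantile functions determine measures, so $\mathcal L\mu^\epsilon=\mu^\epsilon$.

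For uniqueness, let $\nu$ be any nonatomic invariant measure. Step 1 shows that $Q_\nu$ is a nondecreasing measurable solution of the fixed-point equation on $(0,1)$. The pointwise uniqueness argument in Lemma~\ref{lem:quantile-fixed-point} works for each $u$ individually, giving $Q_\nu=Q^\epsilon$ on $(0,1)$, and hence $\nu=\mu^\epsilon$.

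\textbf{Step 3 (identification with $\overline\mu^\epsilon$).} Since $\mu^\epsilon\in AC(I)$, we have $\overline{\mathcal L}\mu^\epsilon=\mathcal L\mu^\epsilon=\mu^\epsilon$. By the uniqueness of the fixed point of $\overline{\mathcal L}$ in Lemma~\ref{lem:self}, it follows that $\mu^\epsilon=\overline\mu^\epsilon$.

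As an optional consistency check, Step 1 also yields $d_W(\mathcal L\mu,\mathcal L\nu)\le\lambda^{-1}(1-\epsilon)d_W(\mu,\nu)$ via \eqref{eq:d_w-dim1}, which matches Proposition~\ref{prop:equil}.
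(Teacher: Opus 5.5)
Your proposal is correct and follows essentially the same route as the paper. Nonatomicity gives $F_\mu(Q_\mu(u))=u$, and Lemma~\ref{lem:quantile-push} then turns invariance into the scalar fixed-point equation. Pointwise contraction of $x\mapsto T((1-\epsilon)x+\epsilon u)$ gives uniqueness among nonatomic invariant measures, and uniqueness of the fixed point of $\overline{\mathcal L}$ identifies $\mu^\epsilon$ with $\overline\mu^\epsilon$. The only difference is cosmetic: you obtain the quantile of $(g_{\epsilon,\mu})_*\mu$ by writing it as $(g_{\epsilon,\mu}\circ Q_\mu)_*m$, whereas the paper applies Lemma~\ref{lem:quantile-push} directly to $T\circ g_{\epsilon,\mu}$, which is continuous and nondecreasing for nonatomic $\mu$.
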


\begin{proof}
We first prove invariance. Since \(\mu^\epsilon\) is nonatomic,
\[
F_{\mu^\epsilon}^-(x)=F_{\mu^\epsilon}(x),
\qquad x\in I,
\]
and
\[
F_{\mu^\epsilon}(Q^\epsilon(u))=u,
\qquad u\in(0,1).
\]
Therefore,
\[
g_{\epsilon,\mu^\epsilon}(Q^\epsilon(u))
=
(1-\epsilon)Q^\epsilon(u)+\epsilon u.
\]
By Lemma~\ref{lem:quantile-push},
\[
\begin{aligned}
Q_{\mathcal L\mu^\epsilon}(u)
&=
T\!\left(g_{\epsilon,\mu^\epsilon}(Q^\epsilon(u))\right)\\
&=
T\bigl((1-\epsilon)Q^\epsilon(u)+\epsilon u\bigr)\\
&=
Q^\epsilon(u).
\end{aligned}
\]
Since a probability measure is uniquely determined by its quantile function,
\[
\mathcal L\mu^\epsilon=\mu^\epsilon.
\]

We next prove uniqueness in the nonatomic class. Let
\(\nu\in\mathcal P(I)\) be nonatomic and suppose that
\(\mathcal L\nu=\nu\). As above, for every \(u\in(0,1)\),
\[
F_\nu^-(Q_\nu(u))=F_\nu(Q_\nu(u))=u.
\]
Hence
\[
\begin{aligned}
Q_\nu(u)
&=Q_{\mathcal L\nu}(u)\\
&=T\bigl((1-\epsilon)Q_\nu(u)+\epsilon u\bigr).
\end{aligned}
\]
For each fixed \(u\), the map
\[
x\longmapsto T\bigl((1-\epsilon)x+\epsilon u\bigr)
\]
is a contraction with Lipschitz constant at most
\(\lambda^{-1}(1-\epsilon)<1\). Its fixed point is therefore unique.
By Lemma~\ref{lem:quantile-fixed-point},
\[
Q_\nu(u)=Q^\epsilon(u),
\qquad u\in(0,1),
\]
and consequently \(\nu=\mu^\epsilon\).

Finally, since \(\mu^\epsilon\in AC(I)\), the extension
\(\overline{\mathcal L}\) agrees with \(\mathcal L\) at
\(\mu^\epsilon\). Thus
\[
\overline{\mathcal L}\mu^\epsilon
=
\mathcal L\mu^\epsilon
=
\mu^\epsilon.
\]
By uniqueness of the fixed point of \(\overline{\mathcal L}\),
established in Lemma~\ref{lem:self}, we conclude that
\[
\mu^\epsilon=\overline\mu^\epsilon.
\]
\end{proof}

\begin{remark}\label{rmk:nonunique}
The uniqueness statement of Corollary~\ref{cor:corunique} does not
extend to the whole space \(\mathcal P(I)\). Indeed, consider the map
\[
H_\epsilon(a):=T\bigl((1-\epsilon)a\bigr),
\qquad a\in I.
\]
It is a contraction of \(I\), with Lipschitz constant at most
\[
\lambda^{-1}(1-\epsilon)<1.
\]
It therefore has a unique fixed point \(a_\epsilon\in I\), satisfying
\[
a_\epsilon=T\bigl((1-\epsilon)a_\epsilon\bigr).
\]
For \(\mu=\delta_{a_\epsilon}\), our convention
\[
F_\mu^-(x)=\mu([0,x))
\]
gives
\[
F_\mu^-(a_\epsilon)=0
\]
and hence
\[
g_{\epsilon,\mu}(a_\epsilon)
=
(1-\epsilon)a_\epsilon.
\]
Consequently,
\[
\mathcal L\delta_{a_\epsilon}
=
\delta_{T((1-\epsilon)a_\epsilon)}
=
\delta_{a_\epsilon}.
\]
Thus the original self-consistent transfer operator always admits an
atomic invariant measure in addition to the unique nonatomic invariant
measure \(\mu^\epsilon\). The latter is distinguished by its physical
attraction property.
\end{remark}
We are left with the proof of Proposition \ref{prop:equil}.
\subsection{Proof of Proposition \ref{prop:equil}}\label{sec:proof-prop-equil}
The proof of Proposition~\ref{prop:equil} relies on the following lemma.

%%%%%%%

\begin{lemma}\label{lem:coupling-contraction}
Let \(\mu,\nu\in \mathcal P(I)\) be nonatomic probability measures; fix $\epsilon\in(0,1)$. 
Denote by $m$ the Lebesgue measure on $I$. Then the following identities hold:
\begin{align}
d_{W}\big(m,L_{g_{\epsilon,\mu}}\mu\big)
&= (1-\epsilon)\,d_{W}(m,\mu),\label{meas3-quant}\\
d_{W}\big(L_{g_{\epsilon,\nu}}\nu,\;L_{g_{\epsilon,\mu}}\mu\big)
&= (1-\epsilon)\,d_{W}(\nu,\mu).\label{meas4-quant}
\end{align}
\end{lemma}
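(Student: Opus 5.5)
The plan is to work entirely in quantile coordinates and use the one-dimensional formula \eqref{eq:d_w-dim1}. The key intermediate claim is that for a nonatomic $\mu\in\mathcal P(I)$ and every $u\in(0,1)$,
\[
Q_{L_{g_{\epsilon,\mu}}\mu}(u)=(1-\epsilon)Q_\mu(u)+\epsilon u .
\]
Once this is available, both identities follow by direct integration. We have $Q_m(u)=u$, so
\[
d_W\bigl(m,L_{g_{\epsilon,\mu}}\mu\bigr)=\int_0^1\bigl|u-(1-\epsilon)Q_\mu(u)-\epsilon u\bigr|\,du=(1-\epsilon)\int_0^1|u-Q_\mu(u)|\,du=(1-\epsilon)\,d_W(m,\mu),
\]
which is \eqref{meas3-quant}. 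Likewise the $\epsilon u$ terms cancel in $\int_0^1|Q_{L_{g_{\epsilon,\nu}}\nu}-Q_{L_{g_{\epsilon,\mu}}\mu}|\,du$, and this gives \eqref{meas4-quant}. The values at $u=0,1$ are irrelevant since they form a null set.

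To prove the quantile identity, I would apply Lemma~\ref{lem:quantile-push} with the map $g_{\epsilon,\mu}$ in place of $T$. Since $\mu$ is nonatomic, $F^-_\mu=F_\mu$ is continuous and nondecreasing. Hence $g_{\epsilon,\mu}=(1-\epsilon)\mathrm{Id}+\epsilon F_\mu$ is continuous and strictly increasing, and in particular nondecreasing and left-continuous, so the lemma applies. This gives
\[
Q_{L_{g_{\epsilon,\mu}}\mu}(u)=g_{\epsilon,\mu}(Q_\mu(u))=(1-\epsilon)Q_\mu(u)+\epsilon F_\mu(Q_\mu(u)).
\]

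It then remains to check that $F_\mu(Q_\mu(u))=u$ for $u\in(0,1)$. I expect this to be the main, though mild, obstacle, and it is exactly where nonatomicity is used. Right-continuity of $F_\mu$ gives $F_\mu(Q_\mu(u))\ge u$. For the reverse inequality, for every $x<Q_\mu(u)$ we have $F_\mu(x)<u$, and continuity of $F_\mu$ (no atoms) lets us pass to the limit $x\uparrow Q_\mu(u)$ to get $F_\mu(Q_\mu(u))\le u$. The edge case $Q_\mu(u)=0$ needs separate treatment: there $F_\mu(0)=\mu(\{0\})=0<u$, which contradicts $F_\mu(Q_\mu(u))\ge u$, so this case cannot occur. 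This fact was already used without comment in Corollary~\ref{cor:corunique}, so it could simply be cited, but I would include the short argument above for completeness.
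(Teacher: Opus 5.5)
Your proposal is correct and follows the paper's proof. Both arguments apply Lemma~\ref{lem:quantile-push} to the continuous nondecreasing map $g_{\epsilon,\mu}$ and use $F_\mu(Q_\mu(u))=u$ to get $Q_{L_{g_{\epsilon,\mu}}\mu}(u)=(1-\epsilon)Q_\mu(u)+\epsilon u$, then integrate via \eqref{eq:d_w-dim1}. Your explicit check of $F_\mu(Q_\mu(u))=u$, including ruling out $Q_\mu(u)=0$, fills in a step the paper only asserts from nonatomicity.
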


\begin{proof}
We use the pseudo-inverse
representation \eqref{def:pseudo}.
For any fixed nonatomic probability measure \(\mu\), the map
\[
g_{\epsilon,\mu}(x)
=
(1-\epsilon)x+\epsilon F_\mu^-(x)
\]
is nondecreasing and continuous. Since \(\mu\) is nonatomic,
\[
F_\mu^-(x)=F_\mu(x),
\qquad x\in I,
\]
and therefore
\[
g_{\epsilon,\mu}(Q_\mu(u))
=
(1-\epsilon)Q_\mu(u)+\epsilon u.
\] Hence
\begin{equation}\label{eq:quantile-coupling}
Q_{(g_{\epsilon,\mu})_*\mu}(u)
= g_{\epsilon,\mu}\big(Q_\mu(u)\big)
= (1-\epsilon)Q_\mu(u)+\epsilon u,
\qquad u\in[0,1].
\end{equation}

The first equality follows from Lemma~\ref{lem:quantile-push}. For the second equality note that, by nonatomicity, for every \(u\in(0,1)\),
\[
F_\mu(Q_\mu(u))=u.
\]

Since \(Q_m(u)=u\), identity \eqref{eq:quantile-coupling} yields
\[
Q_{L_{g_{\epsilon,\mu}}\mu}(u)=(1-\epsilon)Q_\mu(u)+\epsilon u.
\]
Therefore, by \eqref{eq:d_w-dim1},
\[
\begin{aligned}
d_W\big(m,L_{g_{\epsilon,\mu}}\mu\big)
&=\int_0^1 \big|Q_m(u)-Q_{L_{g_{\epsilon,\mu}}\mu}(u)\big|\,du\\
&=\int_0^1 \big|u-[(1-\epsilon)Q_\mu(u)+\epsilon u]\big|\,du\\
&=\int_0^1 (1-\epsilon)\big|u-Q_\mu(u)\big|\,du\\
&=(1-\epsilon)\,d_W(m,\mu),
\end{aligned}
\]
which proves \eqref{meas3-quant}.

Similarly, applying the same formula to both $\mu$ and $\nu$ gives
\[
Q_{L_{g_{\epsilon,\nu}}\nu}(u)=(1-\epsilon)Q_\nu(u)+\epsilon u,
\qquad
Q_{L_{g_{\epsilon,\mu}}\mu}(u)=(1-\epsilon)Q_\mu(u)+\epsilon u.
\]
Therefore,
\[
\begin{aligned}
d_W\big(L_{g_{\epsilon,\nu}}\nu,\;L_{g_{\epsilon,\mu}}\mu\big)
&=\int_0^1 \big|(1-\epsilon)\big(Q_\nu(u)-Q_\mu(u)\big)\big|\,du\\
&=(1-\epsilon)\int_0^1 |Q_\nu(u)-Q_\mu(u)|\,du\\
&=(1-\epsilon)\,d_W(\nu,\mu),
\end{aligned}
\]
which proves \eqref{meas4-quant} and completes the proof.
\end{proof}
We are now ready to prove Proposition \ref{prop:equil}, which at this point is just the standard proof of the Lipschitz
stability of Wasserstein distance under push-forwards, and it is presented for completeness.

\begin{proof}
For \(\mu\in AC(I)\), the map \(g_{\epsilon,\mu}\) is continuous, maps \(I\) onto itself, and satisfies
\[
g_{\epsilon,\mu}(y)-g_{\epsilon,\mu}(x)
\ge (1-\epsilon)(y-x),\qquad x<y.
\]
Thus its inverse is Lipschitz. Since \(\inf_I T'>0\), the inverse of \(T\) on \(T(I)\) is also Lipschitz. Consequently, the inverse of \(T\circ g_{\epsilon,\mu}\) on \(T(I)\) is Lipschitz and maps Lebesgue-null sets to Lebesgue-null sets. Since \(\mu\ll m\), it follows that
\[
\mathcal L(\mu)
=
(T\circ g_{\epsilon,\mu})_*\mu
\]
is absolutely continuous with respect to the Lebesgue measure, that is,
\(
\mathcal L(\mu)\in AC(I).
\)
\\
Next, let $\mu,\nu\in AC(I)$ and fix $\epsilon\in(0,1)$. 
Recall that
\[
\mc L(\mu)
=
L_T\big(L_{g_{\epsilon,\mu}}(\mu)\big)
=
(T\circ g_{\epsilon,\mu})_*
\mu.
\]

By Lemma~\ref{lem:coupling-contraction} we have
\[
d_W\big(L_{g_{\epsilon,\nu}}\nu,\;L_{g_{\epsilon,\mu}}\mu\big)
\le
(1-\epsilon)\,d_W(\nu,\mu).
\]

It remains to estimate the effect of $L_T$. 
Since $T$ satisfies $0<T'(x)\le \lambda^{-1}<1$, it is $\lambda^{-1}$-Lipschitz, so
\[
\operatorname{Lip}(\varphi\circ T)
\le
\lambda^{-1}\operatorname{Lip}(\varphi).
\]
Hence, if \(\operatorname{Lip}(\varphi)\le1\),
\[
\operatorname{Lip}(\varphi\circ T)\le\lambda^{-1}.
\]
Using \eqref{def:bL-dist},
we obtain for any $\mu_1,\mu_2\in \mathcal P(I)$
\[
\begin{aligned}
d_W(L_T\mu_1,L_T\mu_2)
&=
\sup_{\operatorname{Lip}(\varphi)\le1}
\left|
\int \varphi\, d(L_T\mu_1)
-
\int \varphi\, d(L_T\mu_2)
\right|\\
&=
\sup_{\operatorname{Lip}(\varphi)\le1}
\left|
\int \varphi\circ T\, d\mu_1
-
\int \varphi\circ T\, d\mu_2
\right|\\
&\le
\lambda^{-1}\, d_W(\mu_1,\mu_2).
\end{aligned}
\]

Applying this estimate with 
$\mu_1=L_{g_{\epsilon,\nu}}\nu$ and 
$\mu_2=L_{g_{\epsilon,\mu}}\mu$, we conclude
\[
d_W(\mc L\nu,\mc L\mu)
\le
\lambda^{-1}\,
d_W\big(L_{g_{\epsilon,\nu}}\nu,\;L_{g_{\epsilon,\mu}}\mu\big)
\le
\lambda^{-1}(1-\epsilon)\,d_W(\nu,\mu).
\]

This proves Proposition~\ref{prop:equil}.
\end{proof}

\begin{proof}[Proof of Theorem~\ref{thm:main-physical}]
By Corollary~\ref{cor:corunique}, the measure
\(\mu^\epsilon\in AC(I)\) is invariant under the original
self-consistent transfer operator and coincides
with the attracting measure \(\overline\mu^\epsilon\) of
Lemma~\ref{lem:self}. Therefore, for every \(\mu_0\in AC(I)\),
\[
d_W\bigl(\mathcal L^n\mu_0,\mu^\epsilon\bigr)
\le
\sigma_\epsilon^n d_W(\mu_0,\mu^\epsilon).
\]
In particular, \(\mu^\epsilon\) is physical.

Suppose that \(\nu^\epsilon\) is another physical invariant measure.
For any \(\mu_0\in AC(I)\), the definition of physicality and the
estimate above give
\[
\mathcal L^n\mu_0\rightharpoonup\nu^\epsilon
\qquad\text{and}\qquad
\mathcal L^n\mu_0\rightharpoonup\mu^\epsilon.
\]
Since weak limits are unique,
\[
\nu^\epsilon=\mu^\epsilon.
\]
\end{proof}

%%%%%%%
\section{Finite-particle system and thermodynamic limit}\label{sec:finiteN}

\subsection{Approximation by the finite-particle equilibrium}
This section concerns the long-time behavior of the finite system itself. In particular, one may ask whether the equilibrium configurations selected by the finite-$N$ dynamics approximate the invariant measure of the continuum STO. In this section we show that the continuum equilibrium naturally induces a discrete equilibrium configuration for the finite particle system. Moreover, we prove that the associated empirical measure converges to the invariant measure of the STO with an explicit rate of order $1/N$ in Wasserstein distance.

For $N\ge1$, the finite-particle dynamics is defined by
\[
x_i^{(n+1)}
=T\!\left(
(1-\epsilon)x_i^{(n)}
+\frac{\epsilon}{N}
\#\{0\le j\le N-1:\ x_j^{(n)}<x_i^{(n)}\}
\right),
\qquad 0\le i\le N-1.
\]
The strict inequality agrees with the left-continuous CDF convention.

The following estimate shows that the continuum equilibrium is recovered by sampling its quantile at equally spaced ranks.
\begin{proposition}\label{prop:finiteN-equilibrium}
Let $Q^\epsilon$ be the quantile fixed point from Lemma~\ref{lem:quantile-fixed-point}, and define
\[
x_i^{\epsilon,N}:=Q^\epsilon\!\left(\frac{i}{N}\right),
\qquad i=0,\dots,N-1,
\]
and set
\[
\mu^{\epsilon,N}:=\frac1N\sum_{i=0}^{N-1}\delta_{x_i^{\epsilon,N}}.
\]
Then the configuration
\(
(x_0^{\epsilon,N},\dots,x_{N-1}^{\epsilon,N})
\)
is a fixed point of the finite-$N$ dynamics. Equivalently, the associated
empirical measure satisfies
\[
\mathcal L(\mu^{\epsilon,N})=\mu^{\epsilon,N}.
\]
\end{proposition}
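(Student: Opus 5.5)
The plan is to check the fixed-point property directly, coordinate by coordinate, using the strict monotonicity of $Q^\epsilon$ established in Lemma~\ref{lem:quantile-fixed-point}. Because $Q^\epsilon$ is strictly increasing on $[0,1]$, the points satisfy
\[
x_0^{\epsilon,N}<x_1^{\epsilon,N}<\dots<x_{N-1}^{\epsilon,N}.
\]
The configuration is therefore strictly ordered and has no coincident particles. As a result, the rank count in the finite-$N$ dynamics is explicit:
\[
\#\{0\le j\le N-1:\ x_j^{\epsilon,N}<x_i^{\epsilon,N}\}=i .
\]
Substituting into the update rule, the $i$-th coordinate after one step becomes
\[
T\Bigl((1-\epsilon)Q^\epsilon(i/N)+\epsilon\, \tfrac{i}{N}\Bigr).
\]
The fixed-point equation of Lemma~\ref{lem:quantile-fixed-point}, evaluated at $u=i/N\in[0,1]$, says this equals $Q^\epsilon(i/N)=x_i^{\epsilon,N}$. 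So the configuration is a fixed point of the finite-$N$ map.

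For the equivalent measure formulation, I will compute $F^-_{\mu^{\epsilon,N}}$ at the atoms. Since the atoms are distinct and each has mass $1/N$, we get
\[
F^-_{\mu^{\epsilon,N}}(x_i^{\epsilon,N})=\mu^{\epsilon,N}\bigl([0,x_i^{\epsilon,N})\bigr)=\frac{i}{N}.
\]
Hence
\[
g_{\epsilon,\mu^{\epsilon,N}}(x_i^{\epsilon,N})=(1-\epsilon)x_i^{\epsilon,N}+\epsilon\, \tfrac{i}{N}.
\]
The pushforward is
\[
\mathcal L(\mu^{\epsilon,N})=\frac1N\sum_{i=0}^{N-1}\delta_{T(g_{\epsilon,\mu^{\epsilon,N}}(x_i^{\epsilon,N}))}.
\]
By the computation above, each atom of $\mathcal L(\mu^{\epsilon,N})$ coincides with the atom $x_i^{\epsilon,N}$ of $\mu^{\epsilon,N}$, so the two measures are equal. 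This also shows that the empirical measure of the finite-$N$ dynamics evolves exactly by $\mathcal L$, which is what makes the two formulations equivalent.

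The only delicate point is the rank identity. It needs strictly distinct positions, since ties would lower the count under the strict-inequality convention. It also needs the lower-rank convention, so that the smallest particle receives rank $0$ and the sample points are taken at $i/N$ for $i=0,\dots,N-1$ rather than at $(i+1)/N$. Strict increase of $Q^\epsilon$ from Lemma~\ref{lem:quantile-fixed-point} handles the first requirement, and matching the index shift to $F^-$ handles the second. The fact that $u=0$ is included is harmless, because the fixed-point identity of Lemma~\ref{lem:quantile-fixed-point} holds on all of $[0,1]$.
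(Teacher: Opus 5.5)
Your proof is correct and follows essentially the same route as the paper: strict monotonicity of $Q^\epsilon$ gives distinct ordered atoms, so $F^-_{\mu^{\epsilon,N}}(x_i^{\epsilon,N})=i/N$, and the fixed-point equation at $u=i/N$ then shows that each atom is fixed by $T\circ g_{\epsilon,\mu^{\epsilon,N}}$. Your explicit coordinatewise check of the rank count in the finite-$N$ update adds a little detail to what the paper does at the level of measures, but the argument is the same.
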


\begin{proof}
Since $Q^\epsilon$ is strictly increasing, the points
$x_i^{\epsilon,N}=Q^\epsilon(i/N)$ are ordered increasingly and distinct. Hence
\[
\mu^{\epsilon,N}([0,x_i^{\epsilon,N}))=\frac{i}{N}.
\]
Therefore,
\[
g_{\epsilon,\mu^{\epsilon,N}}(x_i^{\epsilon,N})
=
(1-\epsilon)x_i^{\epsilon,N}
+
\epsilon\frac{i}{N}.
\]
Using the fixed point equation for $Q^\epsilon$ at $u=i/N$, we get
\[
T\!\left(g_{\epsilon,\mu^{\epsilon,N}}(x_i^{\epsilon,N})\right)
=
T\!\left((1-\epsilon)Q^\epsilon\!\left(\frac{i}{N}\right)
+\epsilon\frac{i}{N}\right)
=
Q^\epsilon\!\left(\frac{i}{N}\right)
=
x_i^{\epsilon,N}.
\]
Thus every atom of $\mu^{\epsilon,N}$ is fixed by the map
$T\circ g_{\epsilon,\mu^{\epsilon,N}}$. Consequently,
\[
\mc L(\mu^{\epsilon,N})
=
(T\circ g_{\epsilon,\mu^{\epsilon,N}})_*\mu^{\epsilon,N}
=
\mu^{\epsilon,N}.
\]
\end{proof}
The previous proposition shows that the quantile discretization is not merely a numerical approximation of the physical equilibrium, but in fact gives an equilibrium of the finite-particle system with distinct ordered particles.

\begin{proposition}\label{prop:finiteN-attraction}
Let
\[
x_0^{(0)}<x_1^{(0)}<\cdots<x_{N-1}^{(0)}
\]
and let \(\{x_i^{(n)}\}\) evolve according to the finite-particle
dynamics. Then
\[
x_0^{(n)}<x_1^{(n)}<\cdots<x_{N-1}^{(n)}
\qquad\text{for every }n\ge0,
\]
and
\[
\max_{0\le i\le N-1}
\left|x_i^{(n)}-x_i^{\epsilon,N}\right|
\le
\sigma_\epsilon^n
\max_{0\le i\le N-1}
\left|x_i^{(0)}-x_i^{\epsilon,N}\right|,
\]
where
\[
\sigma_\epsilon=\lambda^{-1}(1-\epsilon)<1.
\]
In particular, the configuration
\[
(x_0^{\epsilon,N},\ldots,x_{N-1}^{\epsilon,N})
\]
is the unique equilibrium among strictly ordered configurations and is
exponentially attracting.

If
\[
\eta_N^{(n)}
=
\frac1N\sum_{i=0}^{N-1}\delta_{x_i^{(n)}},
\]
then
\[
d_W(\eta_N^{(n)},\mu^{\epsilon,N})
\le
\sigma_\epsilon^n
d_W(\eta_N^{(0)},\mu^{\epsilon,N}).
\]
\end{proposition}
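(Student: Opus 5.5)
The plan is to argue by induction on $n$, using the fact that a strictly ordered configuration has explicit ranks. If $x_0^{(n)}<\cdots<x_{N-1}^{(n)}$, then $\#\{j:x_j^{(n)}<x_i^{(n)}\}=i$. The dynamics therefore reduces to the decoupled system
\[
x_i^{(n+1)}=T\bigl((1-\epsilon)x_i^{(n)}+\epsilon\tfrac iN\bigr),\qquad 0\le i\le N-1 .
\]
Order is preserved by this step. For $i<k$ both $x_i^{(n)}<x_k^{(n)}$ and $i/N<k/N$ hold, so the argument of $T$ is strictly larger for $k$, and $T$ is strictly increasing. By induction every iterate is strictly ordered, and the reduced formula is valid for all $n$.

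Next I compare with the equilibrium. By Proposition~\ref{prop:finiteN-equilibrium}, equivalently by the fixed-point equation of Lemma~\ref{lem:quantile-fixed-point} at $u=i/N$, we have $x_i^{\epsilon,N}=T\bigl((1-\epsilon)x_i^{\epsilon,N}+\epsilon i/N\bigr)$. Subtracting this from the reduced formula and using that $T$ is $\lambda^{-1}$-Lipschitz gives
\[
|x_i^{(n+1)}-x_i^{\epsilon,N}|\le\lambda^{-1}(1-\epsilon)|x_i^{(n)}-x_i^{\epsilon,N}|=\sigma_\epsilon|x_i^{(n)}-x_i^{\epsilon,N}|.
\]
Iterating this and taking the maximum over $i$ yields the sup bound. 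Uniqueness follows at once: any strictly ordered equilibrium is its own orbit, so its distance to $(x_i^{\epsilon,N})$ is multiplied by $\sigma_\epsilon<1$ at each step while remaining unchanged, and hence must be zero.

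For the Wasserstein statement I use the one-dimensional formula \eqref{eq:d_w-dim1}. For two empirical measures whose atoms are both sorted increasingly, $d_W=\frac1N\sum_i|x_i-y_i|$. This holds because both quantile functions are step functions equal to the $i$-th order statistic on $((i)/N,(i+1)/N]$, up to endpoint conventions. Both configurations here are sorted with matching indices. Summing the per-particle bound $|x_i^{(n)}-x_i^{\epsilon,N}|\le\sigma_\epsilon^n|x_i^{(0)}-x_i^{\epsilon,N}|$ over $i$ and dividing by $N$ then gives $d_W(\eta_N^{(n)},\mu^{\epsilon,N})\le\sigma_\epsilon^n d_W(\eta_N^{(0)},\mu^{\epsilon,N})$.

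The only mildly delicate point is this identification of $d_W$ for sorted empirical measures with the index-matched average. I expect to verify it directly from the definition \eqref{def:pseudo} of the quantile function of an empirical measure with distinct atoms. The rest is routine.
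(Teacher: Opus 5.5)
Your proposal is correct and follows essentially the same argument as the paper: preserved strict ordering reduces the dynamics to $x_i^{(n+1)}=T((1-\epsilon)x_i^{(n)}+\epsilon i/N)$, subtracting the equilibrium identity gives a coordinatewise $\sigma_\epsilon$-contraction, and the sorted-atom formula $d_W=\frac1N\sum_i|x_i-y_i|$ transfers this to the empirical measures. Your explicit induction, the spelled-out uniqueness argument, and the check of the quantile step functions make precise points that the paper leaves implicit.
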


\begin{proof}
The ordering is preserved because
\[
x_i^{(n+1)}
=
T\left((1-\epsilon)x_i^{(n)}+\epsilon\frac{i}{N}\right)
\]
and both the argument of \(T\) and \(T\) itself are strictly increasing
with respect to \(i\).

Since \(x_i^{\epsilon,N}\) is an equilibrium,
\[
x_i^{\epsilon,N}
=
T\left((1-\epsilon)x_i^{\epsilon,N}
+\epsilon\frac{i}{N}\right).
\]
Therefore,
\[
\begin{aligned}
\left|x_i^{(n+1)}-x_i^{\epsilon,N}\right|
&\le
\lambda^{-1}(1-\epsilon)
\left|x_i^{(n)}-x_i^{\epsilon,N}\right|\\
&=
\sigma_\epsilon
\left|x_i^{(n)}-x_i^{\epsilon,N}\right|.
\end{aligned}
\]
Iteration proves the first estimate and also uniqueness of the
equilibrium in the strictly ordered sector.

Since both empirical measures have increasingly ordered atoms of equal
weight,
\[
d_W(\eta_N^{(n)},\mu^{\epsilon,N})
=
\frac1N\sum_{i=0}^{N-1}
\left|x_i^{(n)}-x_i^{\epsilon,N}\right|.
\]
Applying the same coordinatewise estimate proves the final assertion.
\end{proof}

We now quantify the approximation error between the two equilibria.

\begin{theorem}\label{thm:fixedpoint-approx}
Let \(\mu^\epsilon\) be the invariant measure in
Corollary~\ref{cor:corunique}, and let
\[
\tau:=\min_{x\in I}T'(x)>0.
\]
Then
\[
\frac{\tau\epsilon}{2N}
\le
d_W(\mu^{\epsilon,N},\mu^\epsilon)
\le
\frac{M_\epsilon}{2N},
\]
where
\[
M_\epsilon
:=
\frac{\lambda^{-1}\epsilon}
{1-\lambda^{-1}(1-\epsilon)}.
\]
In particular, the finite-particle equilibrium converges to the
nonatomic equilibrium in \(d_W\) with optimal order \(N^{-1}\).
\end{theorem}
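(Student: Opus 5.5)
The plan is to use the quantile formula \eqref{eq:d_w-dim1} for both measures. The quantile function of $\mu^{\epsilon,N}$ is a step function: for $u\in\bigl(\tfrac{i}{N},\tfrac{i+1}{N}\bigr]$ we have $Q_{\mu^{\epsilon,N}}(u)=x_i^{\epsilon,N}=Q^\epsilon(i/N)$, since the atoms are strictly ordered and carry mass $1/N$ each. The endpoints form a null set and are irrelevant. Since $Q^\epsilon$ is nondecreasing and $Q^\epsilon(u)\ge Q^\epsilon(i/N)$ on that interval, we get
\[
d_W(\mu^{\epsilon,N},\mu^\epsilon)=\sum_{i=0}^{N-1}\int_{i/N}^{(i+1)/N}\bigl(Q^\epsilon(u)-Q^\epsilon(i/N)\bigr)\,du .
\]
Both bounds then reduce to two-sided Lipschitz estimates on the increments $Q^\epsilon(u)-Q^\epsilon(v)$ for $v<u$.

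\textbf{Lower bound.} The proof of Lemma~\ref{lem:quantile-fixed-point} already gives $Q^\epsilon(u)-Q^\epsilon(v)\ge\tau\epsilon(u-v)$. Integrating on each cell gives $\int_{i/N}^{(i+1)/N}\tau\epsilon(u-i/N)\,du=\tau\epsilon/(2N^2)$. Summing over the $N$ cells yields $\tau\epsilon/(2N)$.

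\textbf{Upper bound.} The same fixed-point equation gives the reverse estimate. Since $T$ is $\lambda^{-1}$-Lipschitz and increasing,
\[
Q^\epsilon(u)-Q^\epsilon(v)\le\lambda^{-1}\bigl((1-\epsilon)(Q^\epsilon(u)-Q^\epsilon(v))+\epsilon(u-v)\bigr).
\]
The increment is finite and nonnegative, so we can absorb it on the left. This gives $Q^\epsilon(u)-Q^\epsilon(v)\le M_\epsilon(u-v)$, with $M_\epsilon=\lambda^{-1}\epsilon/(1-\lambda^{-1}(1-\epsilon))$. Integrating cellwise exactly as in the lower bound gives $M_\epsilon/(2N)$.

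\textbf{Optimal order.} Both constants are independent of $N$ and positive, so the order $N^{-1}$ is sharp. There is no real obstacle here. The only points needing care are the identification of the step quantile of $\mu^{\epsilon,N}$, which requires the distinctness of the atoms established in Proposition~\ref{prop:finiteN-equilibrium}, and the absorption step in the upper bound, which uses $\sigma_\epsilon<1$. One should also note that the sign of the integrand is constant, so the absolute value can be dropped.
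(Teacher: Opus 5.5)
Your proposal is correct and follows the paper's proof essentially step by step. You plug the step-function quantile of $\mu^{\epsilon,N}$ into \eqref{eq:d_w-dim1}, take the lower increment bound $\tau\epsilon(u-v)$ from Lemma~\ref{lem:quantile-fixed-point}, obtain the upper bound $M_\epsilon(u-v)$ by absorbing the $(1-\epsilon)$ term using $\sigma_\epsilon<1$, and integrate cellwise; your remarks on dropping the absolute value and on the role of distinct atoms match what the paper uses implicitly.
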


\begin{proof}
For \(0\le u<v\le1\), the fixed-point equation for \(Q^\epsilon\)
gives
\[
\begin{aligned}
Q^\epsilon(v)-Q^\epsilon(u)
&=
T\bigl((1-\epsilon)Q^\epsilon(v)+\epsilon v\bigr)\\
&\quad-
T\bigl((1-\epsilon)Q^\epsilon(u)+\epsilon u\bigr).
\end{aligned}
\]
Using \(T'\le\lambda^{-1}\), we obtain
\[
Q^\epsilon(v)-Q^\epsilon(u)
\le
\lambda^{-1}
\Bigl(
(1-\epsilon)\bigl(Q^\epsilon(v)-Q^\epsilon(u)\bigr)
+\epsilon(v-u)
\Bigr).
\]
Therefore,
\[
Q^\epsilon(v)-Q^\epsilon(u)
\le
M_\epsilon(v-u).
\]
On the other hand, since \(T'\ge\tau\) and \(Q^\epsilon\) is
nondecreasing,
\[
Q^\epsilon(v)-Q^\epsilon(u)
\ge
\tau\epsilon(v-u).
\]

For almost every
\[
u\in\left(\frac{i}{N},\frac{i+1}{N}\right],
\]
the quantile function of \(\mu^{\epsilon,N}\) is
\[
Q_{\mu^{\epsilon,N}}(u)
=
Q^\epsilon\left(\frac{i}{N}\right).
\]
Consequently,
\[
\begin{aligned}
d_W(\mu^{\epsilon,N},\mu^\epsilon)
&=
\sum_{i=0}^{N-1}
\int_{i/N}^{(i+1)/N}
\left[
Q^\epsilon(u)
-
Q^\epsilon\left(\frac{i}{N}\right)
\right],du.
\end{aligned}
\]
For \(u\in(i/N,(i+1)/N]\), the preceding two-sided Lipschitz estimate
gives
\[
\tau\epsilon\left(u-\frac{i}{N}\right)
\le
Q^\epsilon(u)-Q^\epsilon\left(\frac{i}{N}\right)
\le
M_\epsilon\left(u-\frac{i}{N}\right).
\]
Integrating and summing over \(i\) yields
\[
\frac{\tau\epsilon}{2N}
\le
d_W(\mu^{\epsilon,N},\mu^\epsilon)
\le
\frac{M_\epsilon}{2N}.
\]
\end{proof}

%%%%%%%THMD LMT%%%%%%
\subsection{Thermodynamic limit}
We now turn to the evolution itself and investigate the relation between the finite particle dynamics and the self-consistent transfer operator uniformly over all time steps.

Since empirical measures are atomic whereas the contraction result of Proposition~\ref{prop:equil} applies to absolutely continuous measures, a direct comparison is not available. The following lemma provides the required mixed estimate, comparing one step of the finite particle dynamics with one step of the continuum evolution.
\begin{lemma}\label{lem:mixed-finite-continuum}
Let $\epsilon \in (0,1)$,  $\mu\in AC(I)$ and let
\[
\eta_N=\frac1N\sum_{i=0}^{N-1}\delta_{x_{(i)}},
\qquad
x_{(0)}<x_{(1)}<\cdots<x_{(N-1)},
\]
be an empirical measure with distinct ordered atoms. Then
\[
d_W\big(L_{g_{\epsilon,\eta_N}}\eta_N,\,
L_{g_{\epsilon,\mu}}\mu\big)
\le
(1-\epsilon)d_W(\eta_N,\mu)+\frac{\epsilon}{2N}.
\]
Consequently,
\[
d_W\big(\mc L(\eta_N),\mc L(\mu)\big)
\le
\lambda^{-1}(1-\epsilon)d_W(\eta_N,\mu)
+
\frac{\lambda^{-1}\epsilon}{2N}.
\]
\end{lemma}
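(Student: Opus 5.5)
Our plan is to work entirely in quantile coordinates, using \eqref{eq:d_w-dim1}, and compare the quantile functions of the two pushed-forward measures pointwise in $u$.

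\textbf{The continuum side.} Since $\mu\in AC(I)$ is nonatomic, identity \eqref{eq:quantile-coupling} from the proof of Lemma~\ref{lem:coupling-contraction} gives
\[
Q_{L_{g_{\epsilon,\mu}}\mu}(u)=(1-\epsilon)Q_\mu(u)+\epsilon u .
\]

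\textbf{The empirical side.} The atoms are distinct and ordered, so $F^-_{\eta_N}(x_{(i)})=i/N$. Hence $L_{g_{\epsilon,\eta_N}}\eta_N$ is the empirical measure with atoms
\[
y_i:=(1-\epsilon)x_{(i)}+\epsilon\, i/N .
\]
These are again strictly increasing in $i$. Its quantile function therefore equals $y_i$ for $u\in(i/N,(i+1)/N]$, while $Q_{\eta_N}(u)=x_{(i)}$ on the same interval. Writing $\lceil\cdot\rceil$ loosely, set $\kappa_N(u):=i/N$ on $(i/N,(i+1)/N]$. Then we may write
\[
Q_{L_{g_{\epsilon,\eta_N}}\eta_N}(u)=(1-\epsilon)Q_{\eta_N}(u)+\epsilon\kappa_N(u)
\]
for a.e.\ $u$. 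Rather than invoking Lemma~\ref{lem:quantile-push} here, we check this directly from the definition of the quantile of an ordered empirical measure. The point is that the rank convention produces $i/N$ rather than $u$.

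\textbf{Combining the two.} Subtracting and using the triangle inequality,
\[
d_W\big(L_{g_{\epsilon,\eta_N}}\eta_N,L_{g_{\epsilon,\mu}}\mu\big)\le (1-\epsilon)\int_0^1|Q_{\eta_N}-Q_\mu|\,du+\epsilon\int_0^1|u-\kappa_N(u)|\,du .
\]
The first term is $(1-\epsilon)d_W(\eta_N,\mu)$ by \eqref{eq:d_w-dim1}. The second is
\[
\epsilon\sum_i\int_{i/N}^{(i+1)/N}(u-i/N)\,du=\epsilon\cdot N\cdot\frac{1}{2N^2}=\frac{\epsilon}{2N}.
\]
This gives the first inequality. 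For the second, we apply the Lipschitz estimate $d_W(L_T\mu_1,L_T\mu_2)\le\lambda^{-1}d_W(\mu_1,\mu_2)$ established in the proof of Proposition~\ref{prop:equil}; it holds for arbitrary $\mu_1,\mu_2\in\mathcal P(I)$. We use $\mathcal L(\eta_N)=L_T(L_{g_{\epsilon,\eta_N}}\eta_N)$ and the analogous identity for $\mu$.

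\textbf{Main obstacle.} The only delicate point is the correct identification of the quantile function of the empirical measure and of its pushforward, including the endpoint and interval conventions. One must verify that the strict ordering of the $y_i$ makes them the sorted atoms, so that the quantile is the piecewise constant $y_i$ on $(i/N,(i+1)/N]$. One must also check that the values at the endpoints $u=i/N$ form a null set and do not affect the integrals. Once this is settled, the rest is a direct computation.
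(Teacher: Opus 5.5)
Your proposal is correct and follows the paper's proof essentially step for step. Both pushforwards are written in quantile coordinates, and the triangle inequality splits the difference into $(1-\epsilon)d_W(\eta_N,\mu)$ plus the rank-discretization error $\epsilon\int_0^1|u-\kappa_N(u)|\,du=\epsilon/(2N)$; the second estimate then follows from the $\lambda^{-1}$-Lipschitz bound for $L_T$, and your explicit check that the atoms $y_i$ stay strictly ordered only spells out a step the paper leaves implicit.
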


\begin{proof}
Let $Q_\mu$ and $Q_{\eta_N}$ denote the quantile functions. Since $\eta_N$ has ordered distinct atoms,
\[
Q_{\eta_N}(u)=x_{(i)}
\qquad\text{for }u\in\Big(\frac{i}{N},\frac{i+1}{N}\Big],
\]
and
\[
g_{\epsilon,\eta_N}(x_{(i)})
=
(1-\epsilon)x_{(i)}+\epsilon\frac{i}{N}.
\]
Hence
\[
Q_{L_{g_{\epsilon,\eta_N}}\eta_N}(u)
=
(1-\epsilon)Q_{\eta_N}(u)+\epsilon\frac{i}{N},
\qquad
u\in\Big(\frac{i}{N},\frac{i+1}{N}\Big].
\]
On the other hand, since $\mu$ is nonatomic,
\[
Q_{L_{g_{\epsilon,\mu}}\mu}(u)
=
(1-\epsilon)Q_\mu(u)+\epsilon u.
\]

Therefore,
\[
\begin{aligned}
&d_W\big(L_{g_{\epsilon,\eta_N}}\eta_N,\,
L_{g_{\epsilon,\mu}}\mu\big)\\
&\quad=
\sum_{i=0}^{N-1}
\int_{i/N}^{(i+1)/N}
\left|
(1-\epsilon)\big(Q_{\eta_N}(u)-Q_\mu(u)\big)
+
\epsilon\Big(\frac{i}{N}-u\Big)
\right|\,du\\
&\le
(1-\epsilon)\int_0^1 |Q_{\eta_N}(u)-Q_\mu(u)|\,du
+
\epsilon\sum_{i=0}^{N-1}\int_{i/N}^{(i+1)/N}
\left|u-\frac{i}{N}\right|\,du\\
&=
(1-\epsilon)d_W(\eta_N,\mu)
+
\frac{\epsilon}{2N}.
\end{aligned}
\]
Finally, since $T$ is $\lambda^{-1}$-Lipschitz,
\[
d_W(L_T\alpha,L_T\beta)\le \lambda^{-1}d_W(\alpha,\beta)
\]
for all $\alpha,\beta\in \mathcal P(I)$. Applying this with
\(\alpha=L_{g_{\epsilon,\eta_N}}\eta_N\) and
\(\beta=L_{g_{\epsilon,\mu}}\mu\) gives the second estimate.
\end{proof}
Finally, iterating the one-step estimate of Lemma~\ref{lem:mixed-finite-continuum} allows us to compare the entire finite particle evolution with the continuum dynamics. This yields the following quantitative thermodynamic limit.
\begin{theorem}\label{thm:thermodynamic-limit}
Let $\epsilon \in (0,1)$, $\mu_0\in AC(I)$, and let
\[
\mu^{(n+1)}=\mc L(\mu^{(n)}),\qquad \mu^{(0)}=\mu_0.
\]
Let $\eta_N^{(0)}$ be an empirical measure with $N$ distinct atoms such that
$d_W(\eta_N^{(0)},\mu_0)\to0$, and define
\[
\eta_N^{(n+1)}=\mc L(\eta_N^{(n)}).
\]
Then, for every $n\ge0$,
\[
d_W(\eta_N^{(n)},\mu^{(n)})
\le
\sigma_\epsilon^n d_W(\eta_N^{(0)},\mu_0)
+
\frac{\lambda^{-1}\epsilon}{2N}
\frac{1-\sigma_\epsilon^n}{1-\sigma_\epsilon},
\]
where
$
\sigma_\epsilon:=\lambda^{-1}(1-\epsilon)<1.
$
In particular, for every fixed $n\ge0$,
\[
\eta_N^{(n)}\longrightarrow \mu^{(n)}
\qquad\text{in }d_W.
\]
Moreover,
\[
\sup_{n\ge0}d_W(\eta_N^{(n)},\mu^{(n)})
\le
d_W(\eta_N^{(0)},\mu_0)
+
\frac{\lambda^{-1}\epsilon}{2N(1-\sigma_\epsilon)}.
\]
Consequently,
\[
\sup_{n\ge0}d_W(\eta_N^{(n)},\mu^{(n)})\longrightarrow0
\qquad\text{as }N\to\infty.
\]
\end{theorem}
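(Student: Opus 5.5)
The plan is to iterate the one-step mixed estimate of Lemma~\ref{lem:mixed-finite-continuum} along the two trajectories and then sum the resulting linear recursion. Set $e_n:=d_W(\eta_N^{(n)},\mu^{(n)})$. Two facts make the lemma applicable at every time step.

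\textbf{Step 1: the continuum trajectory stays in $AC(I)$.} Since $\mu_0\in AC(I)$, Proposition~\ref{prop:equil} gives $\mu^{(n)}\in AC(I)$ for all $n$ by induction.

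\textbf{Step 2: the particle configuration stays distinct and ordered.} Order the atoms of $\eta_N^{(0)}$ increasingly. The measure $\eta_N^{(n+1)}=\mc L(\eta_N^{(n)})$ is exactly the empirical measure of the finite-particle dynamics. By the first part of Proposition~\ref{prop:finiteN-attraction}, strict ordering, and hence distinctness of the $N$ atoms, is preserved for every $n$. The mechanism is that $x_i\mapsto T((1-\epsilon)x_i+\epsilon i/N)$ is strictly increasing in $i$. This step is the main point to check: the lemma requires distinct ordered atoms at every time, and that hypothesis must propagate. It does so precisely because the rank $i/N$ is assigned through the strict inequality convention.

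\textbf{Step 3: the recursion.} With both hypotheses in place, the second estimate of Lemma~\ref{lem:mixed-finite-continuum}, applied with $\eta_N=\eta_N^{(n)}$ and $\mu=\mu^{(n)}$, gives
\[
e_{n+1}\le \sigma_\epsilon e_n+\frac{\lambda^{-1}\epsilon}{2N}.
\]
An induction on $n$ then yields
\[
e_n\le \sigma_\epsilon^n e_0+\frac{\lambda^{-1}\epsilon}{2N}\sum_{k=0}^{n-1}\sigma_\epsilon^k,
\]
and summing the geometric series gives the factor $(1-\sigma_\epsilon^n)/(1-\sigma_\epsilon)$ in the stated bound.

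\textbf{Step 4: consequences.} For the uniform-in-time bound, use $\sigma_\epsilon^n\le1$ and $1-\sigma_\epsilon^n\le1$. This gives
\[
\sup_n e_n\le e_0+\frac{\lambda^{-1}\epsilon}{2N(1-\sigma_\epsilon)}.
\]
Both terms tend to zero as $N\to\infty$: the first by the assumption $d_W(\eta_N^{(0)},\mu_0)\to0$, the second explicitly. This gives the uniform convergence, and in particular convergence at each fixed $n$.
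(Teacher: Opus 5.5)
Your proposal is correct and follows essentially the same route as the paper: propagate $\mu^{(n)}\in AC(I)$ via Proposition~\ref{prop:equil} and strict ordering of the atoms by induction, apply the second estimate of Lemma~\ref{lem:mixed-finite-continuum} to get $e_{n+1}\le\sigma_\epsilon e_n+\lambda^{-1}\epsilon/(2N)$, then sum the geometric series. The only cosmetic difference is that you cite the ordering-preservation part of Proposition~\ref{prop:finiteN-attraction}, whereas the paper reproves that step inline.
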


\begin{proof}
By induction, each \(\eta_N^{(n)}\) has \(N\) distinct ordered atoms. Indeed,
if
\[
\eta_N^{(n)}=\frac1N\sum_{i=0}^{N-1}\delta_{x_i^{(n)}},
\qquad
x_0^{(n)}<\cdots<x_{N-1}^{(n)},
\]
then
\[
x_i^{(n+1)}
=
T\left((1-\epsilon)x_i^{(n)}+\epsilon\frac{i}{N}\right),
\]
and the strict monotonicity of \(T\) implies
\[
x_0^{(n+1)}<\cdots<x_{N-1}^{(n+1)}.
\]
Moreover, by Proposition~\ref{prop:equil}, \(\mu^{(n)}\in AC(I)\) for every
\(n\ge0\).
Set
\[
e_n:=d_W(\eta_N^{(n)},\mu^{(n)}).
\]
By Lemma~\ref{lem:mixed-finite-continuum},
\[
e_{n+1}
\le
\sigma_\epsilon e_n
+
\frac{\lambda^{-1}\epsilon}{2N}.
\]
Iterating this inequality gives
\[
e_n
\le
\sigma_\epsilon^n e_0
+
\frac{\lambda^{-1}\epsilon}{2N}
\sum_{j=0}^{n-1}\sigma_\epsilon^j
=
\sigma_\epsilon^n d_W(\eta_N^{(0)},\mu_0)
+
\frac{\lambda^{-1}\epsilon}{2N}
\frac{1-\sigma_\epsilon^n}{1-\sigma_\epsilon}.
\]
The remaining conclusions follow immediately.
\end{proof}

\begin{remark}
Combining Theorems~\ref{thm:thermodynamic-limit} and
\ref{thm:fixedpoint-approx}, we conclude that the finite-particle equilibrium
\(\mu^{\epsilon,N}\) with distinct ordered particles converges to the physical
equilibrium of the continuum model as \(N\to\infty\). Thus the transient dynamics
from the distinct-particle initial configurations considered above and these
equilibria are described by the self-consistent transfer operator in the
thermodynamic limit.
\end{remark}

\noindent \textbf{Acknowledgements}

 S.G. acknowledges the MIUR Excellence Department Project awarded to the
Department of Mathematics, University of Pisa, CUP I57G22000700001. S.G. was
partially supported by the research project "Stochastic properties of
dynamical systems" (PRIN 2022NTKXCX) of the Italian Ministry of Education
and Research. M.T. acknowledges Marie Slodowska-Curie Actions: "Ergodic
Theory of Complex Systems", project no. 843880. The authors acknowledge the UMI Group “DinAmicI” ({www.dinamici.org}) and the INdAM group GNFM.

\end{document}